\newcommand\F{{\mathbb F}}
\newcommand\Z{{\mathbb Z}}
\newcommand\A{{\mathbb A}}
\newcommand\N{{\mathbb N}}
\newcommand\cA{{\mathcal A}}
\newtheorem{theorem}{Theorem}[section]
\newtheorem{lemma}[theorem]{Lemma}
\newtheorem{corollary}[theorem]{Corollary}
\theoremstyle{definition}
\newtheorem{definition}[theorem]{Definition}
\newtheorem{remark}[theorem]{Remark}
\numberwithin{equation}{section}
\begin{document}

\title[Congruence preserving functions]{Congruence preserving functions in the residue class rings of polynomials over finite fields}

\author{Xiumei Li}
\address{School of Mathematical Sciences, Qufu Normal University, Qufu Shandong, 273165, China}
\email{lxiumei2013@mail.qfnu.edu.cn}

\author{Min Sha}
\address{Department of Computing, Macquarie University, Sydney, NSW 2109, Australia}
\email{shamin2010@gmail.com}

\subjclass[2010]{11T06, 11T55}

\keywords{Congruence preserving function, polynomial function, polynomials over finite fields, residue class ring}

\begin{abstract}
In this paper, as an analogue of the integer case, we define congruence preserving functions over the
residue class rings of polynomials over finite fields. 
We establish a counting formula for such congruence preserving functions, 
determine a necessary and sufficient condition under which all congruence preserving functions are also polynomial functions, 
 and  characterize such functions.
\end{abstract}

\maketitle

\section{Introduction}

\subsection{Motivation}

Let $m$ and $n$ be positive integers. In \cite{Chen1995},  Chen gave the following definition.

\begin{definition}[\cite{Chen1995}]   \label{def:Chen}
A function  $f: \Z/n\Z \to \Z/m\Z$ is said to be a 
\textit{polynomial function} if it is representable by a polynomial $F\in \Z[x]$, that is, 
$$
f(a)\equiv F(a)\pmod m, \text{for all\ \ }  a=0,1,\ldots,n-1, 
$$
where $a$ is considered as in $\Z$ when evaluating $F(a)$.
\end{definition}

Chen also extended the above concept to multivariables in \cite{Chen1996}.  
The concept of congruence preserving function from $\Z/n\Z$ to $\Z/m\Z$  was implied in \cite{Chen1995} 
and  was defined clearly by Bhargava in \cite{Bhargava1997-1}. 

\begin{definition}[\cite{Bhargava1997-1}]    \label{def:Bhar}
A function  $f: \Z/n\Z \to \Z/m\Z$ is said to be \textit{congruence preserving} 
if for all $a, b \in \{0,1,\ldots,n-1\}, f(a)\equiv f(b) \pmod d$ whenever $a\equiv b \pmod d$ and $d$ divides $m$.
\end{definition}

 It is easy to see that any polynomial function $f: \Z/n\Z \to \Z/m\Z$ is congruence preserving.
  Chen \cite{Chen1995} posed the problem of determining all pairs $(n,m)$ for which the converse is also true. 
  Bhargava  gave a complete answer to Chen's problem by determining all such pairs in \cite[Theorem 1]{Bhargava1997-1} 
  and called such pairs $(n,m)$ as \textit{Chen pairs}. 
  
 Recently, in \cite[Theorem 1.7]{CGG2016}  C\'{e}gielski, Grigorieff and Guessarian characterized all the congruence preserving functions from $\Z/n\Z$ to $\Z/m\Z$ 
 by using binomial functions. 

In \cite{LS2018}, we generalized the notion of polynomial function to the case of  Dedekind domains.
 In this paper, we want to generalize congruence preserving functions to the case of polynomial rings over finite fields,  
and then  determine all the Chen pairs and establish a characterization for such functions 
 by following the  strategies in \cite{Bhargava1997-1} and in \cite{Frisch1993} respectively.

\subsection{Our situation}

Let $\F_q$ be the finite field of $q$ elements, where $q$ is a power of a prime $p$.
Denote by $\A=\F_q[t]$ the polynomial ring of one variable over $\F_q$. 
For any non-zero $f \in \A$, define $|f| = q^{\deg f}$.  

For any non-constant polynomial $f\in \A$, let $\A_f$ be the residue class ring of $\A$ modulo $f$,
and let
$$
\mathcal{A}_f = \{h\in \A: \, \deg h < \deg f \} \cup \{0\}.
$$
Note that $\mathcal{A}_f $ is a complete set of the representatives of the residue classes modulo $f$.
For any $h \in \cA_f$, denote by $\bar{h}$ the residue class of $h$ modulo $f$. 

From now on, $f, g \in \A$ are two non-constant polynomials.  
Definitions~\ref{def:Chen} and \ref{def:Bhar} can be generalized as follows. 

\begin{definition}[\cite{LS2018}]
A function  $\sigma: \A_f \to \A_g$ is said to be a 
\textit{polynomial function} if it is representable by a polynomial $F \in \A[x]$, that is, 
$$
\sigma(\bar{h}) = F(h) \textrm{ mod $g$} \quad \textrm{for any $h\in \mathcal{A}_f$}. 
$$
\end{definition}

\begin{definition}
A function $\sigma: \A_f \to \A_g$ is said to be \textit{congruence preserving}, 
if for any $h_1, h_2 \in \mathcal{A}_f, \sigma(\bar{h}_1)\equiv \sigma(\bar{h}_2) \pmod{h}$ 
whenever $h_1\equiv h_2 \pmod{h}$ and $h$ divides $g$.
\end{definition}

If every congruence preserving function from $\A_f$ to $\A_g$ is also a polynomial function, 
then we say that  $(f,g)$ is a \textit{Chen pair}. 

In this paper, we give a counting formula for the number of congruence preserving functions from $\A_f$ to $\A_g$, 
determine all the Chen pairs $(f,g)$, and characterize such functions. 

We first present the main results and then prove them 
in Sections~\ref{sec:count-P}, \ref{sec:Chen-P}, \ref{sec:density-P} and \ref{sec:char-P} respectively.

\subsection{Counting congruence preserving functions}

Recall that $f, g \in \A$ are two non-constant polynomials.   
In the sequel, assume that the prime factorization of $g$ is:
\begin{equation}    \label{eq:g-factor}
 g = \alpha P_{1}^{e_{1}}\cdots P_{r}^{e_{r}},  
\end{equation}
where $\alpha \in \F_q^*$,  each $e_i \ge 1$ and each $P_i$ is a monic irreducible polynomial of positive degree over $\F_q, i= 1, \ldots, r$. 
  
\begin{theorem}   \label{thm:count}
The number $M(f,g)$ of congruence preserving functions  from $\A_f$ to $\A_{g}$ is given by 
$$
M(f,g)=\frac{|g|^{q^{n}}}{\prod_{i=1}^{r}|P_{i}|^{(q-1) \sum_{k=1}^{n-1}q^{k}\min\{e_{i},\lfloor\frac{k}{d_{i}}\rfloor\}}},
$$
where $n = \deg f$, and $d_{i} = \deg P_i, i=1, \ldots, r$.
\end{theorem}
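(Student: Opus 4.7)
The plan is to use the Chinese Remainder Theorem to reduce to the prime-power case, then count congruence preserving functions by constructing them one power of $P$ at a time, and finally rearrange the resulting exponent into the form stated in the theorem.

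First, from \eqref{eq:g-factor} and the Chinese Remainder Theorem we have $\A_g \cong \prod_{i=1}^{r} \A_{P_i^{e_i}}$, so a function $\sigma : \A_f \to \A_g$ corresponds bijectively to an $r$-tuple of functions $\sigma_i : \A_f \to \A_{P_i^{e_i}}$. Since every divisor of $g$ factors (up to a unit) as $\prod_i P_i^{j_i}$ with $0 \le j_i \le e_i$, an easy argument --- take one exponent equal to $j$ and the others zero for the forward direction, and recombine via CRT for the reverse --- shows that $\sigma$ is congruence preserving if and only if each $\sigma_i$ is. Hence $M(f,g) = \prod_{i=1}^{r} M(f, P_i^{e_i})$, and it suffices to prove the formula when $g = P^{e}$ for a single monic irreducible $P$ of degree $d$.

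For such a $g$, I would build $\sigma$ layer by layer. For $1 \le j \le e$, let $\mathcal{E}_j$ denote the partition of $\mathcal{A}_f$ into classes modulo $P^j$, and let $N_j$ be the number of such classes. Comparing degrees --- the elements of $\mathcal{A}_f$ have degree $< n$ while every nonzero multiple of $P^j$ has degree $\ge jd$ --- the natural map $\mathcal{A}_f \to \A/P^j\A$ is surjective when $jd \le n$ and injective when $jd \ge n$, which gives $N_j = q^{\min\{n, jd\}}$. The congruence preserving condition is exactly that $\sigma \bmod P^j$ is constant on each $\mathcal{E}_j$-class for every $j$. Since $\mathcal{E}_j$ refines $\mathcal{E}_{j-1}$, once $\sigma \bmod P^{j-1}$ has been chosen, each $\mathcal{E}_j$-class admits exactly $|P|$ permissible values in $\A/P^j$, namely the lifts of its prescribed value in $\A/P^{j-1}$; these choices are mutually independent, so step $j$ contributes a factor of $|P|^{N_j}$, giving
\[
M(f, P^{e}) \;=\; \prod_{j=1}^{e} |P|^{N_j} \;=\; |P|^{\sum_{j=1}^{e} q^{\min\{n, jd\}}}.
\]

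Finally I would verify the algebraic identity
\[
\sum_{j=1}^{e} q^{\min\{n, jd\}} \;=\; e\, q^{n} \;-\; (q-1)\sum_{k=1}^{n-1} q^{k} \min\{e, \lfloor k/d \rfloor\}
\]
by writing $q^{n} - q^{\min\{n, jd\}} = (q-1) \sum_{k=jd}^{n-1} q^{k}$ when $jd < n$ (and $0$ otherwise) and interchanging the order of summation over $j$ and $k$. Substituting back and then multiplying over $i = 1, \ldots, r$ produces the formula claimed for $M(f,g)$. The main obstacle is the inductive counting step --- specifically, isolating the clean claim that the $|P|$ lifts at level $j$ are genuinely independent of the earlier choices and of one another, so that step $j$ contributes exactly the factor $|P|^{N_j}$. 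Once that is in place, the CRT reduction and the combinatorial identity are routine.
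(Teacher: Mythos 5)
Your proposal is correct, and while the CRT reduction to $g=P^e$ is identical to the paper's (its Lemma~\ref{lem:local-global}), your count of $M(f,P^e)$ proceeds along a genuinely different axis. The paper inducts on $n=\deg f$: it restricts a congruence preserving function to $\cA_{t^{n-1}}$ and counts the admissible extensions to the $(q-1)q^{n-1}$ new residues of degree $n-1$, each of which is pinned down modulo $P^{\min\{e,\lfloor (n-1)/d\rfloor\}}$ and hence admits $|P|^{e-\min\{e,\lfloor (n-1)/d\rfloor\}}$ values; this yields a recursion in $n$ whose telescoping gives the exponent directly in the stated form. You instead induct on the exponent of $P$: you observe that congruence preservation for $g=P^e$ is exactly the statement that $\sigma\bmod P^j$ factors through the $N_j=q^{\min\{n,jd\}}$ classes of $\cA_f$ modulo $P^j$ for every $j\le e$, and that lifting from level $j-1$ to level $j$ contributes an independent factor $|P|$ per class. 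The independence you flag as the main obstacle is genuinely unproblematic: the only constraint at level $j$ linking back to earlier levels is that each class's value reduce to the prescribed one modulo $P^{j-1}$, and distinct mod-$P^j$ classes are constrained separately, so step $j$ contributes exactly $|P|^{N_j}$. Your route buys the cleaner intermediate closed form $M(f,P^e)=|P|^{\sum_{j=1}^{e}q^{\min\{n,jd\}}}$, at the cost of the final exponent rearrangement
\[
\sum_{j=1}^{e}q^{\min\{n,jd\}}=eq^{n}-(q-1)\sum_{k=1}^{n-1}q^{k}\min\{e,\lfloor k/d\rfloor\},
\]
which your interchange of summation (counting, for each $k\le n-1$, the $\min\{e,\lfloor k/d\rfloor\}$ indices $j$ with $jd\le k$) establishes correctly. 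Both arguments are complete and of comparable length.
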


Noticing that the total number of functions from $\A_f$ to $\A_g$ is $|g|^{q^{n}}$, 
as a direct consequence we have: 

\begin{corollary} 
Every function $\sigma: \A_f \to \A_g$ is congruence preserving if and only if $\deg f \leq \deg P_{i}$ for any $1\leq i \leq r$.  
\end{corollary}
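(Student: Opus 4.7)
The plan is to read the corollary off directly from Theorem~\ref{thm:count}. A function $\sigma: \A_f \to \A_g$ is congruence preserving for \emph{every} $\sigma$ if and only if $M(f,g)$ matches the total number $|g|^{q^n}$ of all set-theoretic maps $\A_f \to \A_g$ (this total is correct because $|\A_f| = q^n$ and $|\A_g| = |g|$). By Theorem~\ref{thm:count}, that equality holds precisely when the denominator is $1$, which, since each $|P_i| = q^{d_i} > 1$, is equivalent to
\[
(q-1)\sum_{k=1}^{n-1} q^{k}\,\min\!\{e_i,\lfloor k/d_i \rfloor\} = 0 \qquad \text{for every } i = 1,\ldots,r.
\]

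Next, I would unpack this vanishing condition. Since $q \ge 2$ and $q^k > 0$, and since every summand is a nonnegative integer, the entire sum is zero iff each term is zero. As $e_i \ge 1$, having $\min\{e_i,\lfloor k/d_i \rfloor\} = 0$ forces $\lfloor k/d_i \rfloor = 0$, i.e.\ $k < d_i$. Requiring this for all $k = 1,\ldots,n-1$ is equivalent to $n-1 < d_i$, i.e.\ $\deg f = n \le d_i = \deg P_i$. The boundary case $n=1$ is automatic: the sum is empty, and $1 \le d_i$ because each $P_i$ has positive degree. Running the equivalence for all $i$ simultaneously yields the stated characterization.

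I do not expect any genuine obstacle here, because the corollary is a mechanical consequence of Theorem~\ref{thm:count}. The only items to be careful about are verifying that $|g|^{q^n}$ is indeed the total function count (which rests on $|\A_f|=q^n$ and $|\A_g|=|g|$), and handling the empty-sum convention when $\deg f = 1$.
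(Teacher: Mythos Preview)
Your proposal is correct and follows precisely the route the paper intends: the corollary is stated as a ``direct consequence'' of Theorem~\ref{thm:count} together with the observation that the total number of functions $\A_f\to\A_g$ is $|g|^{q^n}$, and you have merely spelled out the straightforward details of that comparison.
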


\begin{remark} 
By  \cite[Theorem 3.1]{LS2018}, we know that every function $\sigma: \A_f \to \A_g$ is a polynomial function if and only if for any $1\leq i \leq r$,
no two elements of $\mathcal{A}_f$ are congruent modulo $P_{i}$, that is, $\deg f \leq \deg P_{i}$. 
So, if every function from $\A_{f}$ to $\A_{g}$ is congruence preserving, then every such function is also a polynomial function.  
\end{remark}

\subsection{Determining Chen pairs} 
\label{sec:Chen}

As usual, $\N$ stands for the set of non-negative integers. 
To simplify the statement, we define a function
$\gamma: \A \setminus \F_q \to  \N\cup\{\infty\}$ on powers of irreducible polynomials $P$ by
if $q=2$
\begin{equation*}
\begin{split}
\gamma(P^e)
& = \left\{\begin{array}{ll}
\infty  & \textrm{if $e=1$,}\\
\infty  & \textrm{if $e=2$ and $\deg P=1$,}\\
\deg P+2  & \textrm{otherwise,}\\
\end{array}
\right.
\end{split}
\end{equation*}
if $q>2$
\begin{equation*}
\begin{split}
\gamma(P^e)
& = \left\{\begin{array}{ll}
\infty  & \textrm{if $e=1$,}\\
\deg P +1  &\textrm{otherwise},
\end{array}
\right.
\end{split}
\end{equation*}
and on all other non-constant polynomials $h$ by
$$
\gamma(h)=\min\{\gamma(Q_{1}^{k_1}),\ldots,\gamma(Q_{s}^{k_s})\},
$$
where $h$ has the prime factorization $\beta Q_{1}^{k_1}\cdots Q_{s}^{k_s}, \beta \in \F_q^*$. 

The characterization of a Chen pair is given below. 

\begin{theorem}   \label{thm:Chen}
The pair $(f,g)$ is a Chen pair if and only if $\deg f<\gamma(g)$.
\end{theorem}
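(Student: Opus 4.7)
The plan is to adapt Bhargava's counting strategy from \cite{Bhargava1997-1}: first reduce to the prime power case using the Chinese Remainder Theorem, then compare the number of congruence preserving functions (Theorem~\ref{thm:count}) with the number of polynomial functions (provided by the companion paper \cite{LS2018}). The factorization $g = \alpha P_1^{e_1}\cdots P_r^{e_r}$ yields $\A_g \cong \prod_{i=1}^r \A_{P_i^{e_i}}$, and a function $\sigma: \A_f \to \A_g$ is congruence preserving (respectively, a polynomial function) if and only if each component $\sigma_i: \A_f \to \A_{P_i^{e_i}}$ is congruence preserving (respectively, a polynomial function); for the polynomial direction, a common $F \in \A[x]$ representing $\sigma$ is obtained by applying CRT coefficient-wise. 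Hence $(f,g)$ is Chen if and only if each $(f, P_i^{e_i})$ is Chen. Since $\gamma(g) = \min_i \gamma(P_i^{e_i})$ by definition, it suffices to treat the prime power case $g = P^e$.

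For $e = 1$, the ring $\A_P$ is a field and the only non-trivial divisor of $P$ is itself (up to units); the congruence preserving condition then forces $\sigma: \A_f \to \A_P$ to depend only on the residue of the input modulo $P$, and Lagrange interpolation over $\A_P$ represents $\sigma$ by a polynomial, matching the prediction $\gamma(P) = \infty$. For $e \geq 2$, since every polynomial function is congruence preserving, writing the two counts as $N(f, P^e) \leq M(f, P^e)$, equality characterizes Chen pairs. Theorem~\ref{thm:count} gives $M(f, P^e)$ explicitly, while $N(f, P^e)$ is provided by \cite{LS2018}; equating these reduces Theorem~\ref{thm:Chen} to a numerical identity in the parameters $n = \deg f$, $e$, $d = \deg P$, and $q$.

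The main obstacle is this case-by-case exponent comparison. For $q > 2$ and $e \geq 2$, one should obtain the threshold $\deg f \leq \deg P$, matching $\gamma(P^e) = \deg P + 1$. For $q = 2$, an additional contribution in the polynomial function count should relax the threshold to $\deg f \leq \deg P + 1$, giving $\gamma(P^e) = \deg P + 2$ in general. The most delicate subcase is $q = 2$, $e = 2$, $\deg P = 1$, where $\A_{P^2}$ has only four elements: here the comparison succeeds for every $f$, yielding $\gamma(P^e) = \infty$, and I would handle it by a direct verification on this small target, exhibiting an explicit polynomial representative for each congruence preserving function. Once these identities are in hand, the piecewise definition of $\gamma$ on prime powers is precisely what makes the statement of Theorem~\ref{thm:Chen} align with the outcome of the counting comparison.
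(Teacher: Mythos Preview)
Your strategy is the paper's strategy: reduce to prime powers via the Chinese Remainder Theorem (this is the paper's Lemma~2.1), then decide the prime-power case by comparing the count $M(f,P^e)$ from Theorem~\ref{thm:count} with the count $N(f,P^e)$ of polynomial functions from \cite{LS2018}. So the overall architecture is correct and matches the paper.

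Where you diverge is in the execution of the exponent comparison, and here the paper is cleaner than your outline. Rather than splitting off $e=1$ via Lagrange interpolation and the tiny target $q=2,\ e=2,\ d=1$ via direct verification, the paper handles all cases at once. After rewriting the exponent of $M(f,P^e)$ as $eq^n-\sum_{k=1}^{q^n-1}\min\{e,\lfloor\log_q k/d\rfloor\}$, the comparison with $N(f,P^e)$ becomes a termwise inequality
\[
\min\Bigl\{e,\Bigl\lfloor\tfrac{\log_q k}{d}\Bigr\rfloor\Bigr\}\ \le\ \min\Bigl\{e,\ \sum_{j\ge 1}\Bigl\lfloor\tfrac{k}{q^{jd}}\Bigr\rfloor\Bigr\},
\]
with equality precisely when $k<2q^d$ or $k\ge q^{ed}$. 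Requiring equality for every $k\le q^n-1$ then forces $q^{ed}\le 2q^d$ or $q^n\le 2q^d$, which unwinds directly to the piecewise definition of $\gamma(P^e)$, including the exceptional $q=2,\ e=2,\ d=1$ case, without any separate verifications. Your case-by-case plan would reach the same conclusions, but this single inequality and its equality criterion is the missing organizing idea that turns the ``numerical identity'' you allude to into a short uniform argument.
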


Specializing Theorem~\ref{thm:Chen} to $f=g$ gives the following corollary directly. 

\begin{corollary}\label{cor:gg} 
The pair $(g,g)$ is a Chen pair if and only if 
\begin{itemize}
\item[(1)] when $q=2$, $x^3 \nmid g, (x+1)^3 \nmid g$, and for all irreducible polynomials $P$ with $\deg P\geq 2, P^2\nmid g$; or 

\item[(2)] when $q>2$, $g$ is a square-free polynomial.
\end{itemize}
\end{corollary}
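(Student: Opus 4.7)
The plan is to apply Theorem~\ref{thm:Chen} with $f=g$, so the assertion reduces to the single numerical inequality $\deg g<\gamma(g)$. Since $\gamma(g)=\min_i\gamma(Q_i^{k_i})$ taken over the prime power factors in the factorization of $g$, this inequality is equivalent to $\deg g<\gamma(Q_i^{k_i})$ for every $i$. I would then analyse this prime power by prime power, separating the two cases $q>2$ and $q=2$ in the definition of $\gamma$.

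For $q>2$, the value $\gamma(Q_i^{k_i})$ is $\infty$ when $k_i=1$ and $\deg Q_i+1$ when $k_i\geq 2$. In the second case the inequality $\deg g<\deg Q_i+1$ fails, because $\deg g\geq k_i\deg Q_i\geq 2\deg Q_i\geq \deg Q_i+1$. Hence the criterion is met precisely when every $k_i=1$, i.e.\ when $g$ is square-free, which is exactly case~(2). For $q=2$, the value $\gamma(Q_i^{k_i})$ equals $\infty$ in exactly two situations: when $k_i=1$, and when $k_i=2$ and $\deg Q_i=1$ (so $Q_i\in\{x,x+1\}$, the only irreducible polynomials of degree one over $\F_2$); in all remaining cases $\gamma(Q_i^{k_i})=\deg Q_i+2$. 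I would show that whenever such a "finite" prime power factor is present, $\deg g<\deg Q_i+2$ must fail: if $\deg Q_i=1$ and $k_i\geq 3$ then $\deg g\geq 3$, while if $\deg Q_i\geq 2$ and $k_i\geq 2$ then $\deg g\geq 2\deg Q_i\geq \deg Q_i+2$. These two obstructions translate directly into the three exclusions listed in case~(1): $x^3\nmid g$, $(x+1)^3\nmid g$, and $P^2\nmid g$ for every irreducible $P$ with $\deg P\geq 2$.

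For the converse in case~(1), if those three exclusions all hold, then every prime power factor $Q_i^{k_i}$ falls in one of the two "$\gamma=\infty$" categories, so $\gamma(g)=\infty$ and the criterion $\deg g<\gamma(g)$ is automatic. I do not expect a real obstacle: the argument is essentially a finite case analysis, and the only bookkeeping is to verify that the arithmetic inequalities $k_i\deg Q_i\geq \deg Q_i+c$ really force the bad prime powers out, thereby showing the listed exclusions are both necessary and sufficient.
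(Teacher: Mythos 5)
Your proposal is correct and follows exactly the route the paper intends: the paper derives Corollary~\ref{cor:gg} by specializing Theorem~\ref{thm:Chen} to $f=g$ and leaves the case analysis of $\deg g<\gamma(g)$ implicit, which is precisely the analysis you carry out, and your arithmetic verifications (e.g.\ $k_i\deg Q_i\geq \deg Q_i+1$ for $q>2$ and the two obstruction cases for $q=2$) are all accurate.
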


Moreover, we can determine the natural density of Chen pairs $(g,g)$, that is, the limit 
\begin{equation*}
\rho = \lim_{m \to \infty} \frac{\# \{g\in \A:\, \textrm{$1 \le \deg g \le m$, $(g,g)$ is a Chen pair}\}}{\# \{g \in \A:\, 1 \le \deg g \le m\}}. 
\end{equation*}
The following result suggests that most of the pairs $(g,g)$ are Chen pairs. 

\begin{theorem}   \label{thm:density}
The natural density of Chen pairs $(g,g)$ is given by 
\begin{equation*}
\begin{split}
\rho 
& = \left\{\begin{array}{ll}
\frac{49}{72}  & \textrm{if $q=2$,}\\
\\
\frac{q-1}{q}  &\textrm{if $q>2$.}
\end{array}
\right.
\end{split}
\end{equation*}
\end{theorem}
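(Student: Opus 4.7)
The plan is as follows. Since the Chen-pair property of $(g,g)$ depends only on the monic factorization of $g$, scaling by a unit in $\F_q^*$ multiplies both the numerator and denominator of the density ratio by $q-1$; since moreover the sums $\sum_{n\leq m}q^n$ are dominated by the top-degree term, it suffices to compute
\[
\rho = \lim_{n \to \infty}\frac{N_n}{q^n},
\]
where $N_n$ is the number of monic Chen-pair polynomials in $\A$ of degree $n$. The main tool is the zeta function
\[
Z(u) = \sum_{h\ \text{monic}} u^{\deg h} = \prod_{P}(1-u^{\deg P})^{-1} = \frac{1}{1-qu},
\]
whose Euler product lets the local condition at each monic irreducible $P$ be imposed independently.

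For $q>2$, Corollary~\ref{cor:gg} says the condition is square-freeness of $g$. Replacing each local factor $(1-u^{\deg P})^{-1}$ by $1+u^{\deg P}=(1-u^{2\deg P})/(1-u^{\deg P})$ gives the classical identity
\[
\sum_{h\ \text{monic sq.-free}} u^{\deg h} \;=\; \prod_P(1+u^{\deg P}) \;=\; \frac{Z(u)}{Z(u^2)} \;=\; \frac{1-qu^2}{1-qu},
\]
whose coefficient of $u^n$ is $q^n-q^{n-1}$ for $n\geq 2$. Dividing by $q^n$ and passing to the limit yields $\rho = (q-1)/q$.

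For $q=2$, Corollary~\ref{cor:gg} allows each of the two degree-one primes $t$ and $t+1$ to occur with exponent in $\{0,1,2\}$, and every irreducible of degree $\geq 2$ to occur with exponent in $\{0,1\}$. Pulling the two degree-one factors out of the square-free Euler product gives the generating function
\[
C(u) \;=\; (1+u+u^2)^2\!\!\prod_{\deg P\geq 2}(1+u^{\deg P}) \;=\; (1+u+u^2)^2\cdot\frac{1-2u^2}{(1-2u)(1+u)^2}.
\]
The only pole of $C(u)$ of minimal modulus is the simple pole at $u=1/2$; the remaining double pole at $u=-1$ lies strictly farther from the origin. Computing
\[
\lim_{u\to 1/2}(1-2u)\,C(u) \;=\; \left(\tfrac{7}{4}\right)^{\!2}\!\cdot\frac{1/2}{(3/2)^2} \;=\; \frac{49}{72},
\]
and applying partial fractions then produces $N_n = [u^n]C(u) = \tfrac{49}{72}\cdot 2^n + O(n)$, hence $\rho = 49/72$.

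The main obstacle is the $q=2$ case, where one must isolate the two degree-one local factors from the square-free Euler product cleanly and then verify that the sub-dominant contribution coming from the pole at $u=-1$ is of strictly lower order after division by $2^n$. The key structural ingredient is the identity $\prod_P(1+u^{\deg P}) = (1-qu^2)/(1-qu)$; once it is available, the rest of the argument is a single partial-fraction expansion and a residue calculation.
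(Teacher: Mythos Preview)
Your proof is correct but takes a genuinely different route from the paper. For $q>2$ both approaches reduce to counting monic square-free polynomials; the paper quotes the formula $S(n)=q^n-q^{n-1}$ from Rosen, while you rederive it via $Z(u)/Z(u^2)$. The substantive difference is the $q=2$ case. The paper partitions the Chen polynomials of degree $n$ into four sets $U_1,\ldots,U_4$ according to whether $x^2\mid g$ and $(x+1)^2\mid g$, derives a recursion for each piece, and solves them to obtain the closed form $T(n)=\tfrac{1}{9}\bigl(49\cdot 2^{n-3}+(-1)^{n-1}(3n-13)\bigr)$ for $n\ge 4$; the density $49/72$ then falls out of the cumulative sum $\sum_{n\le m}T(n)/(2^{m+1}-2)$. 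Your Euler-product argument instead modifies only the two degree-one local factors, writes the generating function $C(u)$ as an explicit rational function, and reads off $49/72$ as the residue at the dominant pole $u=1/2$. Your route is shorter and more structural --- a single residue replaces a page of inclusion--exclusion and recursion --- and would adapt immediately to other local conditions; the paper's approach, on the other hand, produces the exact count $T(n)$ for every $n$, which your partial-fraction expansion gives only implicitly (and indeed the double pole at $u=-1$ is exactly what produces the $(-1)^{n-1}(3n-13)$ term in the paper's formula). Your passage from the paper's cumulative density to the pointwise limit $N_n/q^n$ is handled tersely but is a standard Stolz--Ces\`aro step, so no gap there.
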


Corollary~\ref{cor:gg} implies that $(g,g)$ is a Chen pair if and only if  $\gamma(g)=\infty$, that is, 
if and only if  $(f,g)$ is a Chen pair for all $f\in \A$ by Theorem~\ref{thm:Chen}. 
So, in some sense we can say that most of the pairs $(f,g)$ in $\A \times \A$ are Chen pairs.

\subsection{Characterizing congruence preserving functions } 
\label{sec:char}

We first consider the special case when $g=P^{e}$ with $e \ge 1$ and $P$ irreducible polynomial of degree $d \ge 1$.

 Let $\{b_{0},b_{1},\cdots,b_{q^{d}-1}\}$ be a fixed ordering of the polynomials in $\A$ of degree less than $d$ such that $b_{0}=0,b_{1}=1$ 
 and $\deg b_{i}\leq \deg b_{j}$ for any $1\leq i \leq j$. 
 Then, for every $k\in \mathbb{N}$, define
$$ 
b_{k}=b_{l_{0}}+b_{l_{1}}P+\ldots+b_{l_{m}}P^{m},
$$
where $\sum_{i=0}^{m}l_{i}q^{di}$ is the $q^{d}$-adic expansion of $k$, and define the binomial polynomials over $\F_q(t)$: 
$$
Q_0(x) =  1, \qquad Q_{k}(x)=\frac{\prod_{j=0}^{k-1}(x-b_{j})}{\prod_{j=0}^{k-1}(b_{k}-b_{j})} \quad \textrm{if $k>0$}. 
$$
Clearly, $Q_{1}(x)=x$. 
By \cite[Theorem 3.3]{Wagner1971}, we know that for any $k \in \N$ and any $h \in \A$, 
$Q_k(h)$ is $P$-integral (that is, its valuation at $P$ is non-negative), 
and so its reduction modulo  $P^e$ is well-defined.  
Then, for any $k \in \N$ we define the function $B_k: \A_f \to \A_{P^{e}}$ by 
$$
B_k(\bar{h}) = Q_k(h)  \textrm{ mod $P^e$}  \quad \textrm{for any $h\in \mathcal{A}_f$}.
$$
In particular, $B_0(\bar{h}) = 1  \textrm{ mod $P^e$}$ for any $h \in \cA_f$. 

\begin{theorem}\label{thm:char} 
A function $\sigma: \A_f \to \A_{P^{e}}$ is congruence preserving if and only if 
there is a unique sequence $\{c_{0},c_{1},\cdots,c_{q^{n}-1}\}$ of elements of $\A_{P^{e}}$ such that
\begin{equation}   \label{eq:char}
\sigma = \sum_{k=0}^{q^{n}-1}c_{k}B_{k},
\end{equation} 
and for each $k=1,\ldots, q^{n}-1$, $c_{k}$ is in the subgroup generated by $P^{\mu(k)}\pmod{P^{e}}$ 
with $\mu(k)=\lfloor\frac{\log_{q}k}{d}\rfloor$. Here, $n= \deg f, d= \deg P$.  
\end{theorem}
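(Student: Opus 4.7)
The plan is to show that the $B_k$ for $0 \le k \le q^n-1$ form a triangular $\A_{P^e}$-basis for the module of functions $\A_f \to \A_{P^e}$, and then to single out the congruence preserving ones by a divisibility condition on their coefficients.

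First I would verify that $\{\bar b_0, \ldots, \bar b_{q^n-1}\}$ is a full set of residues modulo $f$. Writing $n = \ell d + s$ with $0 \le s < d$, the top base-$q^d$ digit of any $k < q^n$ lies in $\{0, 1, \ldots, q^s - 1\}$, so by the chosen ordering it represents a polynomial of degree less than $s$; combined with $\deg P = d$ this forces $\deg b_k < d\ell + s = n$. Since the digits of $k$ are recovered from $b_k$ by successive Euclidean division by $P$, the $b_k$ are pairwise distinct polynomials of degree less than $n$, hence exhaust $\mathcal{A}_f$. By construction $Q_k(b_j) = 0$ for $j < k$ and $Q_k(b_k) = 1$, while $Q_k(b_j)$ is $P$-integral for all $j$ by \cite[Theorem~3.3]{Wagner1971}. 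Consequently the matrix $\bigl(B_k(\bar b_j)\bigr)_{0 \le j, k \le q^n - 1}$ is lower triangular with $1$'s on its diagonal, so every function $\sigma: \A_f \to \A_{P^e}$ admits a \emph{unique} expansion $\sigma = \sum_{k=0}^{q^n-1} c_k B_k$, determined recursively by $c_0 = \sigma(\bar b_0)$ and $c_k = \sigma(\bar b_k) - \sum_{j<k} c_j B_j(\bar b_k)$.

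Next I would prove that if $c_k$ lies in the ideal $P^{\mu(k)} \A_{P^e}$ for each $k \ge 1$, then $\sigma = \sum c_k B_k$ is congruence preserving. Letting $v_P$ denote the $P$-adic valuation, the heart of this step is the local estimate
\[
v_P\bigl(Q_k(h_1) - Q_k(h_2)\bigr) \ge v_P(h_1 - h_2) - \mu(k) \qquad (h_1, h_2 \in \A,\ k \ge 1),
\]
which follows from Wagner's fact that the denominator $\prod_{j<k}(b_k - b_j)$ has $P$-valuation exactly $\mu(k)$, together with the observation that the numerator $\prod_{j<k}(x - b_j)$ picks up a factor $(h_1 - h_2)$ when one forms the telescoping difference $Q_k(h_1) - Q_k(h_2)$. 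Multiplying by $c_k$, whose $P$-valuation is at least $\mu(k)$, absorbs the negative contribution, yielding $c_k\bigl(B_k(\bar h_1) - B_k(\bar h_2)\bigr) \equiv 0 \pmod{P^s}$ in $\A_{P^e}$ whenever $h_1 \equiv h_2 \pmod{P^s}$ with $P^s \mid P^e$. Summing over $k$ gives the ``if'' direction.

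The converse follows by counting. The number of tuples $(c_k)$ with $c_k \in P^{\mu(k)} \A_{P^e}$ for every $k \ge 1$ and $c_0$ arbitrary equals $|P|^{\,eq^n - \sum_{k=1}^{q^n-1} \min(e, \mu(k))}$, and regrouping the indices $k$ by $\lfloor \log_q k \rfloor = j$ turns $\sum_{k=1}^{q^n-1} \min(e, \mu(k))$ into $(q-1) \sum_{j=1}^{n-1} q^j \min(e, \lfloor j/d \rfloor)$, matching $M(f, P^e)$ from Theorem~\ref{thm:count}. Combined with the uniqueness of the expansion, the injection of constrained tuples into congruence preserving functions is therefore a bijection, which yields the ``only if'' direction. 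The main obstacle is the $P$-adic estimate above: Wagner's orthonormality of the $Q_k$ in the $P$-adic completion of $\F_q(t)$ (the analogue of Mahler's theorem) makes the statement plausible, but extracting the precise exponent $\mu(k)$ at the finite level $\A_{P^e}$ requires careful bookkeeping, especially in interlocking the $q^d$-adic digit expansion of $k$ with the $P$-adic expansion of $h_1 - h_2$.
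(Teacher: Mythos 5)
Your overall architecture is the same as the paper's: establish the unique triangular expansion $\sigma=\sum_k c_kB_k$ (the paper's Lemma~\ref{lem:rep}), prove sufficiency via a $P$-adic estimate on $Q_k(h_1)-Q_k(h_2)$, and get necessity by counting the constrained tuples and matching against $M(f,P^e)$ from Lemma~\ref{lem:MfPe}. The expansion step and the counting step are correct as you present them (your regrouping of $\sum_{k=1}^{q^n-1}\min\{e,\mu(k)\}$ is exactly the paper's identity~\eqref{eq:MfPe2}).

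The genuine gap is in your justification of the key estimate $v_P\bigl(Q_k(h_1)-Q_k(h_2)\bigr)\ge v_P(h_1-h_2)-\mu(k)$. First, the denominator $\prod_{j<k}(b_k-b_j)$ does \emph{not} have $P$-valuation $\mu(k)$: since $\{b_i\}$ is a homogeneous $P$-sequence, its valuation is the Legendre-type sum $w_k=\sum_{j\ge1}\lfloor k/q^{dj}\rfloor$ (this is what Wagner/Frisch actually give), and $w_k>\mu(k)$ as soon as $k\ge 2q^d$ (e.g.\ $q=2$, $d=1$, $k=4$ gives $w_4=3$ but $\mu(4)=2$). Second, the telescoping identity for the numerator only yields $v_P\bigl(\prod_{j<k}(h_1-b_j)-\prod_{j<k}(h_2-b_j)\bigr)\ge v_P(h_1-h_2)$, and dividing by the true denominator then gives only $v_P\bigl(Q_k(h_1)-Q_k(h_2)\bigr)\ge v_P(h_1-h_2)-w_k$, which is too weak to be absorbed by $v_P(c_k)\ge\mu(k)$. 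The missing ingredient is precisely Frisch's Lemma~\ref{lem:2.24}: the numerator difference in fact gains an \emph{extra} $\sum_{i=1}^{k-1}v_P(b_i')=w_k-\mu(k)$ on top of $v_P(h_1-h_2)$, because after each factor $(h_1-b_j)$ is matched against $(h_2-b_{j'})$ for a suitable reordering, all but the largest of the valuations $v_P(b_i')$ survive in every telescoping term. You acknowledge that ``careful bookkeeping'' is needed here, but that bookkeeping is the actual mathematical content of the sufficiency direction; as written, your argument rests on a false valuation for the denominator and a numerator bound that does not suffice.
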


Finally, the general case follows directly from Theorem~\ref{thm:char} and Lemma~\ref{lem:local-global}~(1) below. 

\begin{corollary}\label{cor:char} 
Assume that the polynomial $g$ has the prime factorization as in \eqref{eq:g-factor}. 
Then, a function $\sigma: \A_f \to \A_g$ is congruence preserving 
if and only if for every $i= 1, \ldots, r$, $\sigma_{i}$ has an expression as in \eqref{eq:char},  
where $\sigma_{i}$ is the reduction of $\sigma$ modulo $ P_{i}^{e_{i}}$.
\end{corollary}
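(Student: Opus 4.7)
The plan is to reduce the general case to the prime-power case handled in Theorem~\ref{thm:char}, the reduction being exactly the content of Lemma~\ref{lem:local-global}(1). So essentially all I need to justify is a Chinese-Remainder-Theorem based local-global principle: $\sigma: \A_f \to \A_g$ is congruence preserving if and only if each reduction $\sigma_i: \A_f \to \A_{P_i^{e_i}}$ is congruence preserving. Once this is established, the equivalence stated in the corollary is immediate from applying Theorem~\ref{thm:char} componentwise to each $\sigma_i$.

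For the forward direction of the local-global statement, suppose $\sigma$ is congruence preserving and fix $i$. If $h_1,h_2 \in \mathcal{A}_f$ satisfy $h_1 \equiv h_2 \pmod{h}$ for some $h \mid P_i^{e_i}$, then $h \mid g$ as well, so $\sigma(\bar h_1) \equiv \sigma(\bar h_2) \pmod{h}$; reducing this congruence modulo $P_i^{e_i}$ (which is a multiple of $h$ up to associates) gives $\sigma_i(\bar h_1) \equiv \sigma_i(\bar h_2) \pmod h$, as required.

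For the converse, suppose each $\sigma_i$ is congruence preserving. Take any divisor $h$ of $g$ and any $h_1 \equiv h_2 \pmod{h}$. Since $\A$ is a unique factorization domain, $h$ has the form $h = \beta \prod_{i=1}^{r} P_i^{k_i}$ with $\beta \in \F_q^*$ and $0 \le k_i \le e_i$. For each $i$, the divisibility $P_i^{k_i} \mid h$ together with congruence preservation of $\sigma_i$ yields $\sigma(\bar h_1) \equiv \sigma(\bar h_2) \pmod{P_i^{k_i}}$. The distinct $P_i^{k_i}$ are pairwise coprime, so by the Chinese Remainder Theorem these congruences combine to $\sigma(\bar h_1) \equiv \sigma(\bar h_2) \pmod{\prod_i P_i^{k_i}}$, hence modulo $h$.

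With the local-global principle in place, the corollary unfolds mechanically: $\sigma$ is congruence preserving iff every $\sigma_i$ is congruence preserving iff, by Theorem~\ref{thm:char} applied to the pair $(f, P_i^{e_i})$, every $\sigma_i$ admits an expression of the form~\eqref{eq:char} with the prescribed constraints on the coefficients. There is no real obstacle here; the only subtlety is making sure the local-global lemma is framed so that its forward and backward directions both go through cleanly, and this is just the CRT bookkeeping above.
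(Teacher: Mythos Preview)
Your proposal is correct and follows exactly the same approach as the paper: combine Lemma~\ref{lem:local-global}(1) with Theorem~\ref{thm:char}. The paper states this corollary without proof and also omits the proof of Lemma~\ref{lem:local-global}, so you have in fact supplied more detail than the paper by writing out the CRT argument explicitly; that argument is fine as written.
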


Different from the integer case (see \cite[Theorem 1.7]{CGG2016}), 
in the case of $\A$ we don't have a uniform characterization for such congruence preserving functions. 
The reason is that there is no sequence over $\A$ such that it is a $P$-sequence (see Definition~\ref{def:Pseq} below) 
for any irreducible polynomial $P$ in $\A$; see, for instance, \cite[Examples 2.5 and 2.6]{Frisch1999}.

\section{Proof of Theorem~\ref{thm:count}}
\label{sec:count-P}

 The following lemma is an analogue of  \cite[Proposition 1]{Bhargava1997-1} proved via the Chinese Remainder Theorem. 
 It implies that we only need to consider the special case when $g=P^{e}$ with $e \ge 1$ and $P$ irreducible polynomial. 
 We omit its proof. 

\begin{lemma}  \label{lem:local-global}
Let $\sigma: \A_f\to\A_g$ be a function. 
Assume that $g$ has the prime factorization as in \eqref{eq:g-factor}. 
For $1\leq i\leq r$, let $\sigma_{i}: \A_f\to\A_{P_{i}^{e_{i}}}$ be the functions obtained by taking the function values of $\sigma$ modulo $P_{i}^{e_{i}}$. 
Then  
\begin{itemize}
\item[(1)] $\sigma: \A_f\to\A_g$ is congruence preserving if and only if $\sigma_{i}: \A_f\to\A_{P_{i}^{e_{i}}}$ is for each $1\leq i\leq r$.

\item[(2)] $\sigma: \A_f\to\A_g$ is a polynomial function if and only if $\sigma_{i}: \A_f\to\A_{P_{i}^{e_{i}}}$ is for each $1\leq i\leq r$.

\item[(3)] $(f,g)$ is a Chen pair if and only if $(f,P_{i}^{e_{i}})$ is  for each $1\leq i\leq r$.

\end{itemize}
\end{lemma}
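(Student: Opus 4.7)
The plan is to derive all three parts from the canonical Chinese Remainder Theorem isomorphism
\[
\A_g \;\cong\; \A_{P_1^{e_1}}\times\cdots\times \A_{P_r^{e_r}},
\]
under which a function $\sigma:\A_f\to\A_g$ corresponds to the tuple $(\sigma_1,\ldots,\sigma_r)$ of its componentwise reductions. I would first record that every monic divisor $h$ of $g$ can be written as $h=\prod_{i=1}^r P_i^{k_i}$ with $0\le k_i\le e_i$, and that by CRT the congruence $a\equiv b\pmod h$ is equivalent to the simultaneous congruences $a\equiv b\pmod{P_i^{k_i}}$ for each $i$.

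For part (1), I would just unpack the definition. If $\sigma$ is congruence preserving and $h_1\equiv h_2\pmod{P_i^{k_i}}$ with $k_i\le e_i$, then $P_i^{k_i}\mid g$, so $\sigma(\bar h_1)\equiv\sigma(\bar h_2)\pmod{P_i^{k_i}}$, and reducing modulo $P_i^{e_i}$ shows $\sigma_i$ is congruence preserving. Conversely, given each $\sigma_i$ is congruence preserving and $h_1\equiv h_2\pmod h$ with $h=\prod P_i^{k_i}\mid g$, each $\sigma_i(\bar h_1)\equiv \sigma_i(\bar h_2)\pmod{P_i^{k_i}}$, and gluing back via CRT gives $\sigma(\bar h_1)\equiv \sigma(\bar h_2)\pmod h$.

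For part (2), one direction is immediate: a representing polynomial $F\in\A[x]$ for $\sigma$ also represents $\sigma_i$ after reduction. For the converse, given $F_i\in\A[x]$ representing $\sigma_i$, I would pad the $F_i$ to a common degree by adjoining zero coefficients, and then apply the CRT in $\A$ coefficientwise to produce a single $F\in\A[x]$ with $F\equiv F_i\pmod{P_i^{e_i}}$; this $F$ represents $\sigma$. Part (3) is then a purely formal consequence of (1) and (2): if every $(f,P_i^{e_i})$ is a Chen pair, then a congruence preserving $\sigma$ gives each $\sigma_i$ congruence preserving, hence polynomial, hence $\sigma$ polynomial; conversely, to show $(f,P_i^{e_i})$ is a Chen pair when $(f,g)$ is, I would take any congruence preserving $\tau:\A_f\to\A_{P_i^{e_i}}$, lift it to $\sigma:\A_f\to\A_g$ by setting the $i$-th component to $\tau$ and all other components to $0$, observe $\sigma$ is congruence preserving by (1), conclude it is polynomial, and read off that $\sigma_i=\tau$ is polynomial.

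There is no real obstacle here; the whole argument is a routine CRT packaging. The only point where one has to be a little careful is the coefficientwise CRT in part (2), where the polynomials $F_i$ may have different degrees and must first be aligned; but once they are padded, the classical CRT in the PID $\A$ handles each coefficient independently and yields the required $F$.
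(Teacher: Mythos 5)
Your proof is correct, and it follows exactly the route the paper indicates: the paper omits the proof of this lemma but states that it is the analogue of Bhargava's Proposition~1 ``proved via the Chinese Remainder Theorem,'' which is precisely your argument. The details you supply (componentwise reduction for (1), coefficientwise CRT on padded representing polynomials for (2), and the lift-with-zero-components trick for the converse of (3)) are all sound.
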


We first establish a counting formula for the number of congruence preserving functions from $\A_f$ to $\A_{P^e}$.

\begin{lemma}\label{lem:MfPe}
The number $M(f,P^e)$ of congruence preserving functions from $\A_f$ to $\A_{P^e}$ is given by 
$$
M(f,P^e)=|P|^{eq^{n}-(q-1)\Sigma_{k=1}^{n-1}q^{k}\min\{e,\lfloor\frac{k}{d}\rfloor\}},
$$
where $n=\deg f, d = \deg P$.  
\end{lemma}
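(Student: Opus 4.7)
The plan is to exploit the fact that the non-trivial divisors of $P^e$ are exactly the powers $P^k$ for $1 \le k \le e$. Hence a function $\sigma:\A_f \to \A_{P^e}$ is congruence preserving if and only if, for each $1 \leq k \leq e$, its reduction modulo $P^k$ descends to a well-defined function $\sigma_k: \cA_f/P^k \to \A_{P^k}$. Equivalently, $\sigma$ is in bijective correspondence with a compatible tower $(\sigma_1, \ldots, \sigma_e)$ satisfying $\sigma_{k+1} \bmod P^k = \sigma_k$ under the natural projections, and with $\sigma = \sigma_e$ recovered at the top.

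To count such towers, I would first determine $N_k := |\cA_f / P^k|$, the number of distinct residues of elements of $\cA_f$ modulo $P^k$. A short degree argument yields $N_k = q^{\min(kd,n)}$: if $kd \ge n$, any two polynomials of degree less than $n$ that agree modulo $P^k$ must coincide, so $N_k = q^n$; if $kd < n$, the reduction map $\cA_f \to \A/P^k$ is surjective with fibres of uniform size $q^{n-kd}$, so $N_k = q^{kd}$. For each $0 \le k < e$, lifting a prescribed $\sigma_k$ to a compatible $\sigma_{k+1}$ consists in choosing, independently on each of the $N_{k+1}$ classes $\bar{h} \in \cA_f/P^{k+1}$, a value in $\A_{P^{k+1}}$ whose reduction modulo $P^k$ equals $\sigma_k(\bar{h} \bmod P^k)$; this leaves exactly $|P|$ choices per class. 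Multiplying yields
\[
M(f,P^e) \;=\; \prod_{k=0}^{e-1} |P|^{N_{k+1}} \;=\; |P|^{\sum_{k=1}^{e} q^{\min(kd,n)}}.
\]

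It then remains to match this exponent with the one stated in the lemma, i.e.\ to verify the identity
\[
\sum_{k=1}^{e} q^{\min(kd,n)} \;=\; e q^{n} - (q-1) \sum_{k=1}^{n-1} q^{k} \min\{e,\lfloor k/d \rfloor\}.
\]
I would rewrite the left-hand side as $eq^n - \sum_{k=1}^e \bigl(q^n - q^{\min(kd,n)}\bigr)$, apply the geometric identity $q^n - q^{kd} = (q-1)\sum_{l=kd}^{n-1} q^l$ whenever $kd < n$ (the summand vanishing otherwise), and swap the order of summation: for each $l \in \{1,\ldots,n-1\}$, the number of indices $k$ with $1 \le k \le e$ and $kd \le l$ is precisely $\min(e, \lfloor l/d \rfloor)$. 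This produces exactly the desired right-hand side.

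The step demanding the most care is the setup of the tree/tower structure, specifically verifying that the fibres of $\cA_f \to \A/P^k$ are uniform when $kd < n$ and that the lifting count at each layer is truly independent across classes. Once this is in hand, the closing identity is a routine exchange of summation order.
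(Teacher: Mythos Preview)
Your argument is correct and takes a genuinely different route from the paper. The paper inducts on $n=\deg f$: it fixes the codomain $\A_{P^e}$ and counts extensions of a congruence preserving function on $\cA_{t^{n-1}}$ to one on $\cA_{t^n}$, obtaining the recursion
\[
M(t^n,P^e)=\bigl(|P|^{\,e-\min\{e,\lfloor (n-1)/d\rfloor\}}\bigr)^{(q-1)q^{n-1}}\,M(t^{n-1},P^e).
\]
You instead fix the domain and induct along the chain $P\supset P^2\supset\cdots\supset P^e$, counting lifts $\sigma_k\rightsquigarrow\sigma_{k+1}$ layer by layer. Both schemes are natural; yours exploits more directly the structural characterisation of congruence preservation (factorisation through every quotient $\cA_f/P^k$), which makes the intermediate formula $M(f,P^e)=|P|^{\sum_{k=1}^{e}q^{\min(kd,n)}}$ drop out with essentially no computation, at the cost of a final swap-of-summation identity. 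The paper's induction on $n$ avoids that closing identity but requires the observation that the ``largest $j$'' modulo which the new value is constrained equals $\min\{e,\lfloor (n-1)/d\rfloor\}$.

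One small point of phrasing: you write ``with $\sigma=\sigma_e$ recovered at the top'', but when $ed<n$ the map $\sigma_e$ is a function on the strictly smaller set $\cA_f/P^e$, not on $\A_f$. What you mean (and what makes the bijection work) is that the congruence condition for the divisor $P^e$ itself forces $\sigma$ to factor through the projection $\cA_f\to\cA_f/P^e$, so $\sigma$ is the pullback of $\sigma_e$. This is implicit in your setup, but it is worth stating explicitly, since it is exactly the point where the two directions of the bijection between congruence preserving $\sigma$ and compatible towers are seen to be mutually inverse.
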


\begin{proof}
By definition, since $\cA_f = \cA_{t^n}$, we have $M(f,P^e)= M(t^n,P^e)$. 
So, it is equivalent to compute $M(t^n,P^e)$. 

If $n=1$, then $\A_t = \F_q$. 
Note that every function from $\F_q$ to $\A_{P^e}$ is a polynomial function (see, for instance, \cite[Theorem 3.1]{LS2018}), 
and thus a congruence preserving function. 
So, we have 
\begin{equation}   \label{eq:n=1}
M(t,P^e) = |P|^{eq}, 
\end{equation}
which is the desired result when $n=1$. 

Now, assume that $n > 1$. 
To compute $M(t^n,P^e)$, we want to first obtain a recursive relation between $M(t^n,P^e)$ and $M(t^{n-1},P^e)$.

Notice that for any congruence preserving function from $\A_{t^n}$ to $\A_{P^e}$,  
its restriction to $\A_{t^{n-1}}$ gives a congruence preserving function from  $\A_{t^{n-1}}$ to $\A_{P^e}$. 
We therefore need to determine the number of ways a given congruence preserving function $\sigma:\A_{t^{n-1}}\to\A_{P^e}$ 
can be extended to a congruence preserving function from $\A_{t^n}$ to $\A_{P^e}$. 
This is equal to the number of ways $\sigma(\bar{h})$ with $h$ of degree $n-1$ can be assigned while preserving the necessary congruences. 
That is, $\sigma(\bar{h})$ can take values from $\A_{P^e}$, but if $h\equiv a \pmod{P^{j}}$, 
where $a\in\cA_{t^n}$ and $j \leq e$, then we must have
$\sigma(\bar{h})\equiv \sigma(\bar{a}) \pmod{P^{j}}$.

So, we need to know the largest $j$ such that $\sigma(\bar{h})$ is determined modulo $P^{j}$. 
This largest $j$ is easily seen to be given by $\min\{e,\lfloor\frac{n-1}{d}\rfloor\}$.

It follows that if $\sigma$ is to remain congruence preserving when extended to $\A_{t^n}$, 
then $\sigma(\bar{h})$ can take on a total of exactly 
$$
\frac{|P|^e}{|P|^{\min\{e,\lfloor\frac{n-1}{d}\rfloor\}}}
$$ 
values. 
Note that the number of such $h$ with degree $n-1$ is $(q-1)q^{n-1}$, we therefore have the relation
$$
M(t^n,P^e)=(|P|^{e-\min\{e,\lfloor\frac{n-1}{d}\rfloor\}})^{(q-1)q^{n-1}}M(t^{n-1},P^e).
$$
Using this relation repeatedly, together with \eqref{eq:n=1}, yields the desired result.
\end{proof}

Now, it is easy to prove Theorem~\ref{thm:count}. 

\begin{proof}[Proof of Theorem~\ref{thm:count}]
Given a function $\sigma: \A_f\to\A_g$, 
 by reducing the values of $\sigma$ modulo $P_{i}^{e_{i}}$ for $1\leq i\leq r$, we obtain functions $\sigma_{i}: \A_f\to\A_{P_{i}^{e_{i}}}$. 
 Conversely, given functions $\sigma_{i}: \A_f\to\A_{P_{i}^{e_{i}}}$ for $1\leq i\leq r$, by the Chinese Remainder Theorem there is a unique function $\sigma: \A_f\to\A_g $ such that $\sigma$ reduces to $\sigma_{i}$ modulo $P_{i}^{e_{i}}$ for each $i$. 
 This observation, together with Lemma~\ref{lem:local-global} (1),  shows 
$$
M(f,g)=\prod_{i=1}^{r}M(f,P_{i}^{e_{i}}),
$$ 
then by Lemma~\ref{lem:MfPe}, we derive the desired counting formula for such congruence preserving functions.  
\end{proof}

\section{Proof of Theorem~\ref{thm:Chen}}
\label{sec:Chen-P}

The strategy to prove Theorem~\ref{thm:Chen} is to compare the number of congruence preserving functions 
with the number of polynomial functions from $\A_f$ to $\A_{g}$. 
By Lemma~\ref{lem:local-global} (3), we in fact only need to consider functions from $\A_f$ to $\A_{P^e}$ 
with $e \ge 1$ and irreducible polynomial $P$.  

We first recall a counting formula, given in \cite[Theorem 4.4]{LS2018},  for the number of polynomial functions from $\A_f$ to $\A_g$. 

Write $\F_q=\{a_{0}=0,a_{1},\ldots,a_{q-1}\}$, and for every $k\in \mathbb{N}$, let
$$ 
a_{k}=a_{l_{0}}+a_{l_{1}}t+\ldots+a_{l_{m}}t^{m} \in \A,
$$
where $\sum_{i=0}^{m}l_{i}q^{i}$ is the $q$-adic expansion of $k$. This gives us a one-to-one correspondence between $\mathbb{N}$ and $\A$. 
Then, as an analogue of factorials of non-negative integers, one can define factorials for polynomials in $\A$ by 
$$
\prod_{i=0}^{k-1}(a_{k}-a_{i}), \quad k \ge 1, 
$$
and the factorial is $1$ when $k=0$; see \cite{LS2017} for another analogue.

The following is a special case in \cite[Theorem 4.4]{LS2018}. 

\begin{theorem}  \label{thm:Nfg}
The number $N(f,g)$ of polynomial functions  from $\A_f$ to $\A_{g}$ is given by 
$$
N(f,g)=\frac{q^{q^{n}\deg g}}{\prod_{k=1}^{q^{n}-1}q^{\deg\gcd(g, \prod_{i=0}^{k-1}(a_{k}-a_{i}))}}, 
$$
where $n = \deg f$.  
\end{theorem}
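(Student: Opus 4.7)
The plan is to count the polynomial functions $\sigma:\A_f\to\A_g$ by parametrizing them via the Newton interpolation basis built from the ordering $a_0,a_1,a_2,\dots$ of $\A$, and then dividing the total number of parameter tuples by the size of the kernel of the parametrization.

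First I would reduce to polynomials of bounded degree. The polynomial $W(x)=\prod_{j=0}^{q^n-1}(x-a_j)\in\A[x]$ is monic and vanishes on $\cA_f$, so every $F\in\A[x]$ can be replaced by its remainder modulo $W$ without altering the induced function on $\A_f$. Thus every polynomial function is represented by some $F$ with $\deg F<q^n$, which can be written uniquely in the Newton basis as $F(x)=\sum_{k=0}^{q^n-1}c_k\prod_{i=0}^{k-1}(x-a_i)$ with $c_k\in\A$. Reducing the $c_k$ modulo $g$ yields a surjective $\A_g$-linear map $\Phi:\A_g^{q^n}\to\mathcal P$, where $\mathcal P$ is the set of polynomial functions from $\A_f$ to $\A_g$, so $N(f,g)=|\A_g|^{q^n}/|\ker\Phi|$.

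Next I would identify $\ker\Phi$. Evaluation at $a_k$ gives $F(a_k)=\sum_{j=0}^{k}c_j\prod_{i=0}^{j-1}(a_k-a_i)$, because for $j>k$ the product $\prod_{i=0}^{j-1}(a_k-a_i)$ contains the factor $(a_k-a_k)$ and is thus zero in $\A$. Hence $\Phi$ is represented by a lower-triangular matrix with diagonal entries $\delta_k:=\prod_{i=0}^{k-1}(a_k-a_i)$. The central claim is that $(c_0,\dots,c_{q^n-1})\in\ker\Phi$ if and only if $g\mid c_k\delta_k$ for every $k$. I would prove the forward direction by induction on $k$: the base case $k=0$ gives $c_0\equiv 0\pmod g$; for the inductive step, the relation $F(a_k)\equiv 0\pmod g$ combined with the hypothesis on $c_0,\dots,c_{k-1}$ and the key divisibility below forces $g\mid c_k\delta_k$. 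The backward direction is the same induction read in reverse. Once the kernel is characterized, $|\ker\Phi|=\prod_{k=0}^{q^n-1}|\{c\in\A_g:c\delta_k\equiv0\pmod g\}|=\prod_{k=0}^{q^n-1}q^{\deg\gcd(g,\delta_k)}$; the $k=0$ factor is $1$ and the remaining product is the denominator in the stated formula.

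The main obstacle is the divisibility lemma used in both directions of the induction: for every $0\le j\le k\le q^n-1$, $\delta_j$ must divide $\prod_{i=0}^{j-1}(a_k-a_i)$ in $\A$; equivalently, the ``generalized binomial coefficient'' $\prod_{i<j}(a_k-a_i)/\delta_j$ must lie in $\A$. This is the polynomial analogue of ``$\binom{k}{j}\in\Z$'' and depends crucially on the specific ordering of the $a_k$ dictated by the $q$-adic expansion. It can be extracted from the Carlitz--Wagner theory of polynomial factorials, in particular from the global version of \cite[Theorem 3.3]{Wagner1971}, or proved directly by induction using a Pascal-type identity adapted to this ordering. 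Granted this lemma, the remainder is a routine kernel count in a lower-triangular $\A_g$-module map, which yields the stated formula.
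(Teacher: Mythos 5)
The paper does not actually prove this theorem; it is recalled verbatim from \cite[Theorem 4.4]{LS2018}, so there is no internal proof to compare against. Your argument is the standard falling-factorial one (reduce modulo $W(x)=\prod_{j=0}^{q^n-1}(x-a_j)$, expand in the Newton basis, and compute the kernel of the lower-triangular evaluation map), and it is correct: the kernel characterization $g\mid c_k\delta_k$ and the count $\#\{c\in\A_g:\ c\delta_k\equiv 0\ (\mathrm{mod}\ g)\}=q^{\deg\gcd(g,\delta_k)}$ both check out. The one ingredient you leave to a citation --- that $\prod_{i<j}(a_k-a_i)/\delta_j\in\A$ for the specific digit-expansion ordering --- is genuinely needed and genuinely true: it is precisely the statement that $\{a_k\}$ is a simultaneous $P$-ordering of $\F_q[t]$, i.e.\ that the generalized binomial coefficients are integer-valued, which is contained in \cite[Example 3]{Bhargava1997-2} (the same source this paper invokes for $v_P(\delta_k)$ in Corollary~\ref{cor:NfPe}), so relying on it is legitimate and your proof is complete modulo that standard reference.
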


When $g=P^e$, we have: 

\begin{corollary} \label{cor:NfPe}
The number $N(f,P^e)$ of polynomial functions  from $\A_f$ to $\A_{P^e}$ is given by 
$$
N(f,P^e)=|P|^{eq^{n}-\sum_{k=1}^{q^{n}-1}\min\{e,\lfloor\frac{k}{q^d}\rfloor+\lfloor\frac{k}{q^{2d}}\rfloor+\cdots\}},
$$
where $n=\deg f, d = \deg P$.  
\end{corollary}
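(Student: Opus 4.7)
The plan is to specialize Theorem~\ref{thm:Nfg} to $g=P^e$ and simplify. With $\deg P^e = ed$, the numerator $q^{q^n \deg g}$ becomes $|P|^{eq^n}$, accounting for the $eq^n$ in the exponent of the target expression. For each $1 \le k \le q^n-1$, set $\Pi_k = \prod_{i=0}^{k-1}(a_k - a_i) \in \A$. Because $P$ is irreducible, for any non-zero $h \in \A$ one has $\deg \gcd(P^e, h) = d \cdot \min\{e, v_P(h)\}$, where $v_P$ is the $P$-adic valuation. Each denominator factor therefore becomes $|P|^{\min\{e, v_P(\Pi_k)\}}$, and the whole formula reduces to the Legendre-type identity
$$
v_P(\Pi_k) = \left\lfloor \frac{k}{q^d} \right\rfloor + \left\lfloor \frac{k}{q^{2d}} \right\rfloor + \cdots
$$
for every $k \ge 1$.

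To prove this identity, I would rewrite
$$
v_P(\Pi_k) = \sum_{i=0}^{k-1} v_P(a_k - a_i) = \sum_{j \ge 1} N_j(k),
$$
where $N_j(k) = \#\{0 \le i < k : a_i \equiv a_k \pmod{P^j}\}$. The task then reduces to showing $N_j(k) = \lfloor k/q^{jd} \rfloor$ for every $j \ge 1$.

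The structural observation is that $\{a_0, a_1, \ldots, a_{q^{jd}-1}\}$ is exactly the set of polynomials in $\A$ of degree less than $jd = \deg P^j$, and hence a complete set of residues modulo $P^j$. Writing $k = cq^{jd} + r$ with $0 \le r < q^{jd}$, the splitting of the $q$-adic expansion gives the identity $a_{cq^{jd}+r'} = a_{cq^{jd}} + a_{r'}$ (valid because $a_{r'}$ has only terms $t^i$ with $i<jd$ while $a_{cq^{jd}}$ has only terms with $i\ge jd$). Thus each block $\{a_{cq^{jd}}, \ldots, a_{(c+1)q^{jd}-1}\}$ is a coset of the first block and is again a complete set of residues modulo $P^j$. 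The first $c$ complete blocks each contribute exactly one index $i$ with $a_i \equiv a_k \pmod{P^j}$; the partial block $\{a_{cq^{jd}}, \ldots, a_{cq^{jd}+r-1}\}$ contributes none, because $a_k$ itself sits inside the block $\{a_{cq^{jd}}, \ldots, a_{(c+1)q^{jd}-1}\}$, where residues modulo $P^j$ are pairwise distinct. This yields $N_j(k) = c = \lfloor k/q^{jd} \rfloor$.

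The main obstacle is the final observation that $a_k$'s residue modulo $P^j$ differs from those of the earlier elements of its own block even when $P \ne t$, which forces one to pass through the coset structure rather than reading off residues directly from $q$-adic digits. Once the Legendre identity is in hand, substituting back into Theorem~\ref{thm:Nfg} yields the stated expression immediately.
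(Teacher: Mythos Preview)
Your proof is correct and follows the same route as the paper: specialize Theorem~\ref{thm:Nfg} to $g=P^e$ and reduce to computing $v_P\big(\prod_{i=0}^{k-1}(a_k-a_i)\big)$. The only difference is that the paper simply cites the Legendre-type identity $v_P(\Pi_k)=\sum_{j\ge 1}\lfloor k/q^{jd}\rfloor$ from \cite[Example~3]{Bhargava1997-2}, whereas you supply a self-contained proof via the block/coset decomposition of the sequence $\{a_i\}$; your argument is exactly the standard $P$-ordering verification underlying Bhargava's example, so nothing is genuinely new, but your version has the virtue of being self-contained.
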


\begin{proof}
The desired result follows by 
substituting $g=P^e$ in Theorem~\ref{thm:Nfg} and using the fact from \cite[Example 3]{Bhargava1997-2} that 
$$
\gcd(P^e, \prod_{i=0}^{k-1}(a_{k}-a_{i})) = P^{\min\{e,\lfloor\frac{k}{q^d}\rfloor+\lfloor\frac{k}{q^{2d}}\rfloor+\cdots\}}. 
$$
\end{proof}

Recalling the function $\gamma$ defined in Section~\ref{sec:Chen}, we first determine a condition when the pair $(f,P^e)$ is a Chen pair. 

\begin{lemma}   \label{lem:Chen}
The pair $(f,P^{e})$ is a Chen pair if and only if $\deg f < \gamma(P^e)$.  
\end{lemma}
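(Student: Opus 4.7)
The plan is to compare the counting formulas in Lemma~\ref{lem:MfPe} and Corollary~\ref{cor:NfPe}. Since every polynomial function is congruence preserving, we always have $N(f,P^e)\le M(f,P^e)$, with equality precisely when $(f,P^e)$ is a Chen pair. Writing $n=\deg f$, $d=\deg P$, and abbreviating $v_P(k!)=\lfloor k/q^d\rfloor+\lfloor k/q^{2d}\rfloor+\cdots$, taking $\log_{|P|}$ of both formulas reduces the lemma to showing that the numerical identity
\begin{equation*}
(q-1)\sum_{\ell=1}^{n-1} q^\ell \min\{e,\lfloor \ell/d\rfloor\}\;=\;\sum_{k=1}^{q^n-1}\min\{e,v_P(k!)\}
\end{equation*}
holds if and only if $n<\gamma(P^e)$.

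I then group the right-hand side by the length of $k$ in base $q$. Setting $T_\ell=(q-1)q^\ell \min\{e,\lfloor \ell/d\rfloor\}$ and $S_\ell=\sum_{k=q^\ell}^{q^{\ell+1}-1}\min\{e,v_P(k!)\}$, the identity becomes $\sum_\ell S_\ell=\sum_\ell T_\ell$. Because $v_P(k!)$ is nondecreasing in $k$ and $v_P(q^\ell!)=\sum_{j\ge 1,\,jd\le\ell} q^{\ell-jd}\ge \lfloor \ell/d\rfloor$ (a sum of $\lfloor\ell/d\rfloor$ terms each $\ge 1$), monotonicity of $\min$ gives the termwise bound $S_\ell\ge T_\ell$. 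Hence the identity is equivalent to $S_\ell=T_\ell$ for every $\ell=0,1,\ldots,n-1$, and the comparison splits into three regimes: for $\ell<d$ both quantities vanish; for $\ell\ge ed$ the $\min$'s saturate at $e$ and both sides equal $(q-1)q^\ell e$; only the middle range $d\le \ell<ed$ can produce obstructions.

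The decisive computation is at $\ell=d$, where $v_P(k!)=\lfloor k/q^d\rfloor$ throughout $[q^d,q^{d+1}-1]$ takes each value $j\in\{1,\ldots,q-1\}$ exactly $q^d$ times, giving $S_d=q^d\sum_{j=1}^{q-1}\min\{e,j\}$ and $T_d=(q-1)q^d$; hence $S_d=T_d$ iff $e=1$ or $q=2$. For $q\ge 3$ and $e\ge 2$ the obstruction appears already at $\ell=d$, yielding the Chen condition $n-1<d$, i.e.\ $n<d+1=\gamma(P^e)$. For $q=2$, $\ell=d$ is always fine, so I examine the next candidate $\ell=d+1$ analogously: when $d+1<ed$ a direct computation gives $S_{d+1}>T_{d+1}$, but when $d=1$ and $e=2$ one has $d+1=ed=2$, so $\ell=d+1$ already lies in the saturation regime and no obstruction arises. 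Combined with the trivial case $e=1$ (empty middle range, hence $\gamma=\infty$), this recovers every branch of the piecewise definition of $\gamma(P^e)$. The main subtlety will be the coincidence $ed=d+1$ in the single case $(q,d,e)=(2,1,2)$, which is exactly what forces the exceptional value $\gamma=\infty$ there.
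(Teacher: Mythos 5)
Your proposal is correct and follows essentially the same route as the paper: both reduce the Chen-pair condition to the equality $M(f,P^e)=N(f,P^e)$ of the two counting formulas, rewrite the left-hand exponent as a sum over $k=1,\dots,q^n-1$ grouped by base-$q$ digit length, establish the one-sided termwise inequality, and locate the first index where it becomes strict. The only cosmetic difference is that the paper obtains the per-$k$ equality criterion ``$k<2q^{d}$ or $k\ge q^{ed}$'' via the identity $\lfloor \log_q k/d\rfloor=\sum_{i\ge 1}\min\{1,\lfloor k/q^{di}\rfloor\}$, whereas you compute the block sums $S_d$ and $S_{d+1}$ explicitly; both yield the same thresholds and the same exceptional case $(q,d,e)=(2,1,2)$.
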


\begin{proof}
Let $n=\deg f, d = \deg P$.  By definition, $(f,P^{e})$ is a Chen pair if and only if 
$$
M(f,P^e) = N(f,P^e),  
$$
which, together with Lemma~\ref{lem:MfPe} and Corollary~\ref{cor:NfPe}, is equivalent to 
\begin{equation}  \label{eq:fPe1}
|P|^{eq^{n}-(q-1)\sum_{k=1}^{n-1}q^{k}\min\{e,\lfloor\frac{k}{d}\rfloor\}}
= |P|^{eq^{n}-\sum_{k=1}^{q^{n}-1}\min\{e,\lfloor\frac{k}{q^d}\rfloor+\lfloor\frac{k}{q^{2d}}\rfloor+\cdots\}}. 
\end{equation}
Note that 
\begin{equation}   \label{eq:MfPe2}
\begin{split}
(q-1)\sum_{k=1}^{n-1}q^{k}\min\{e,\lfloor\frac{k}{d}\rfloor\} 
& =\sum_{k=1}^{n-1}\sum_{j=q^{k}}^{q^{k+1}-1}\min\{e,\lfloor\frac{\log_{q}j}{d}\rfloor\} \\
& =\sum_{k=1}^{q^{n}-1}\min\{e,\lfloor\frac{\log_{q}k}{d}\rfloor\}.
\end{split}
\end{equation}
So, by \eqref{eq:MfPe2} the condition~\eqref{eq:fPe1} is equivalent to 
\begin{equation}   \label{eq:fPe2}
\sum_{k=1}^{q^{n}-1}\min\{e,\lfloor\frac{\log_{q}k}{d}\rfloor\}
= \sum_{k=1}^{q^{n}-1}\min\{e,\lfloor\frac{k}{q^d}\rfloor+\lfloor\frac{k}{q^{2d}}\rfloor+\cdots\}. 
\end{equation}

On the other hand, for any integer $k \ge 1$  we have 
\begin{equation*}
\lfloor\frac{\log_{q}k}{d}\rfloor = \sum_{i=1}^{\infty}\min\{1,\lfloor\frac{k}{q^{di}}\rfloor\}\leq \lfloor\frac{k}{q^{d}}\rfloor+\lfloor\frac{k}{q^{2d}}\rfloor+\cdots,\end{equation*}
and the equality occurs in the above if and only if $k<2q^{d}$.
Thus, the inequality
\begin{equation}\label{eq:fPe3}
\min\{e,\lfloor\frac{\log_{q}k}{d}\rfloor\} \leq\min\{e,\lfloor\frac{k}{q^d}\rfloor+\lfloor\frac{k}{q^{2d}}\rfloor+\cdots\} 
\end{equation}
also holds, and the equality in \eqref{eq:fPe3} occurs if and only if $k<2q^{d}$ or $k\geq q^{ed}$.

Therefore, using \eqref{eq:fPe3}, the condition~\eqref{eq:fPe2} holds if and only if $k<2q^{d}$ or $k\geq q^{ed}$
for all $1\leq k \leq q^{n}-1$. This is true exactly when $q^{ed}\leq 2q^{d}$ or $q^{n}\leq 2q^{d}$. 
In other words, the condition~\eqref{eq:fPe2} holds  if and only if 
$$
\textrm{$e=1$, or $q=2,e=2,d=1$, or $q^n\leq 2q^d$.} 
$$
By the definition of the function $\gamma$, this is equivalent to the condition $n<\gamma(P^e)$.  
We thus complete the proof. 
\end{proof}

Now, we are ready to prove Theorem~\ref{thm:Chen}. 

\begin{proof}[Proof of Theorem~\ref{thm:Chen}] 
Assume that the polynomial $g$ has the prime factorization as in \eqref{eq:g-factor}. 
By Lemma~\ref{lem:local-global} (3) and Lemma~\ref{lem:Chen}, 
we know that $(f,g)$ is a Chen pair if and only if $\deg f < \gamma(P_{i}^{e_{i}})$ for each $1\leq i\leq r$, 
that is,  if and only if $\deg f < \min\{\gamma(P_{1}^{e_{1}}),\ldots,\gamma(P_{r}^{e_{r}})\}=\gamma(g)$. 
This proves Theorem~\ref{thm:Chen}. 
\end{proof}

\section{Proof of Theorem~\ref{thm:density}}
\label{sec:density-P}

From Corollary~\ref{cor:gg}, we know that there are two cases 
for $(g,g)$ being a Chen pair depending on $q=2$ or $q>2$. 

When $q>2$, it suffices to count square-free polynomials. 
This in fact is well-known (including the case $q=2$); see \cite[Proposition 2.3]{Rosen2000}. 

\begin{lemma} \label{square-free}
Let $S(n)$ be the number of all monic square-free polynomials of degree  $n$ in $\A$. Then
\begin{equation*}
\begin{split}
S(n)
& = \left\{\begin{array}{ll}
1& \textrm{if $n=0$,}\\
q & \textrm{if $n=1$,}\\
q^{n}-q^{n-1}  & \textrm{if $n\geq 2$.}\\
\end{array}
\right.
\end{split}
\end{equation*}
\end{lemma}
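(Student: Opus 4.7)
The plan is to use the generating function (zeta function) identity for $\A=\F_q[t]$ that follows from unique factorization. Specifically, every monic polynomial $f \in \A$ of degree $n$ can be written uniquely in the form $f = u^{2} v$ where $u$ is monic and $v$ is monic square-free. Comparing degrees on both sides, the total count of monic polynomials of degree $n$ decomposes as
\begin{equation*}
q^{n} = \sum_{2k+\ell = n} q^{k}\,S(\ell).
\end{equation*}
Multiplying by $z^{n}$ and summing over $n \geq 0$ turns this convolution identity into the generating-function identity
\begin{equation*}
\sum_{n\ge 0} q^{n} z^{n} \;=\; \Bigl(\sum_{k\ge 0} q^{k} z^{2k}\Bigr)\Bigl(\sum_{\ell\ge 0} S(\ell)\, z^{\ell}\Bigr),
\end{equation*}
i.e. $\frac{1}{1-qz} = \frac{1}{1-qz^{2}} \cdot \sum_{\ell\ge 0} S(\ell)\, z^{\ell}$.

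The next step is to solve for the square-free generating series and simplify:
\begin{equation*}
\sum_{\ell \ge 0} S(\ell)\, z^{\ell} \;=\; \frac{1-qz^{2}}{1-qz} \;=\; (1 - qz^{2})\sum_{\ell \ge 0} q^{\ell} z^{\ell}.
\end{equation*}
Expanding the right-hand side and collecting coefficients gives $S(0)=1$, $S(1)=q$, and $S(\ell) = q^{\ell} - q\cdot q^{\ell-2} = q^{\ell}-q^{\ell-1}$ for $\ell \ge 2$, which is exactly the claimed formula.

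There is no genuine obstacle here; the only point one must be careful about is the factorization $f = u^{2} v$ is indeed \emph{unique} when $u$ and $v$ are taken monic, which is immediate from unique factorization of $f$ into monic irreducibles: group each prime occurring to multiplicity $2k_i + \epsilon_i$ with $\epsilon_i \in \{0,1\}$, collect the squared parts into $u^{2}$ and the remaining single factors into $v$. Once this is noted, the convolution identity and the algebraic manipulation above give the result directly, so the proof will consist of just these two short steps.
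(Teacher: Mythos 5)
Your proof is correct: the decomposition $f=u^{2}v$ with $u$ monic and $v$ monic square-free is indeed unique, the convolution identity and the generating-function manipulation are valid, and extracting coefficients gives exactly the stated values of $S(n)$. The paper itself omits the proof and cites Rosen's book, where the argument is precisely this zeta-function computation, so your approach coincides with the standard one the authors rely on.
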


When $q=2$, it needs more work, 
because we need to count polynomials satisfying the condition in Corollary~\ref{cor:gg}~(1).

\begin{lemma}
\label{square-free2}
When $q=2$, let $T(n)$ be the number of all polynomials of degree $n$ in $\A$ satisfying the condition in Corollary~\ref{cor:gg}~(1). 
 Then
\begin{equation*}
\begin{split}
T(n)
& = \left\{\begin{array}{ll}
1& \textrm{if $n=0$ ,}\\
2& \textrm{if $n=1$ ,}\\
4& \textrm{if $n=2$ ,}\\
6& \textrm{if $n=3$ ,}\\
\frac{1}{9} \big( 2^{n-3}7^2+(-1)^{n-1}(3n-13) \big) & \textrm{if $n\geq4$.}
\end{array}
\right.
\end{split}
\end{equation*}
\end{lemma}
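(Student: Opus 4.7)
My plan is to compute the ordinary generating function $F(u)=\sum_{n\ge 0}T(n)u^n$ in closed form and then read off its coefficients. The first ingredient is a structural observation: a nonzero polynomial $g\in\F_2[t]$ satisfies the conditions of Corollary~\ref{cor:gg}~(1) if and only if it has a unique factorization $g=t^a(t+1)^b\,h$ with $a,b\in\{0,1,2\}$ and $h$ square-free and coprime to $t(t+1)$. The ``$t^a(t+1)^b$'' part contributes $(1+u+u^2)^2$ to $F(u)$ when graded by degree. For the square-free coprime part I will use the standard identity
\[
\sum_{h \text{ monic sqfree}} u^{\deg h}=\prod_P\bigl(1+u^{\deg P}\bigr)=\frac{Z(u)}{Z(u^2)}=\frac{1-2u^2}{1-2u},
\]
where $Z(u)=1/(1-2u)$ is the zeta function of $\F_2[t]$, and then divide out the two Euler factors $(1+u)$ at the primes $t$ and $t+1$. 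The upshot is
\[
F(u)=\frac{(1+u+u^2)^2\,(1-2u^2)}{(1-2u)(1+u)^2}.
\]

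The next step is a partial-fraction decomposition
\[
\frac{1-2u^2}{(1-2u)(1+u)^2}=\frac{2/9}{1-2u}+\frac{10/9}{1+u}-\frac{1/3}{(1+u)^2},
\]
whose $n$-th coefficient equals $\tfrac{1}{9}\bigl(2^{n+1}+(-1)^n(7-3n)\bigr)$. I then convolve this sequence with the coefficient vector $(1,2,3,2,1)$ of $(1+u+u^2)^2$. Collecting the exponential part gives $\tfrac{1}{9}\cdot 2^{n-3}(16+16+12+4+1)=\tfrac{49}{9}\cdot 2^{n-3}$, and the alternating part collapses, using the elementary sums $1-2+3-2+1=1$ and $-2+6-6+4=2$, to $(-1)^n(13-3n)/9=(-1)^{n-1}(3n-13)/9$. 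This produces the claimed closed form for all $n\ge 4$.

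The four values $n=0,1,2,3$ have to be treated separately, because the convolution formula uses coefficients with negative index when $n<4$; they are obtained by direct enumeration of polynomials of degree $\le 3$ in $\F_2[t]$, noting that the only excluded cubics are $t^3$ and $(t+1)^3$, which gives $T(0),T(1),T(2),T(3)=1,2,4,6$. I do not anticipate a genuine obstacle here; the only point requiring real care is the bookkeeping of the local factors at $t$ and $t+1$—correctly removing $(1+u)^2$ from the square-free zeta quotient and reinserting $(1+u+u^2)^2$ for the exponents in $\{0,1,2\}$. Once this is in place, the partial-fraction decomposition and the length-$5$ convolution are entirely mechanical.
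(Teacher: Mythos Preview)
Your proof is correct and takes a genuinely different route from the paper. The paper argues by a direct set decomposition: it partitions the admissible polynomials of degree $n$ into four classes $U_1,\dots,U_4$ according to whether the exponent of $t$ (respectively $t+1$) in $g$ is $\le 1$ or exactly $2$, derives first-order recursions for $\#U_2=\#U_3$ and a second-order recursion for $\#U_4$, solves each of these in closed form using Lemma~\ref{square-free}, and then adds the four pieces. Your argument instead packages the same local information into an Euler product and obtains the single closed-form generating function $F(u)=(1+u+u^2)^2(1-2u^2)/\bigl((1-2u)(1+u)^2\bigr)$, from which the answer follows by partial fractions and a finite convolution.

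Your approach is cleaner and more systematic: the structural input (``$a,b\in\{0,1,2\}$ at $t$ and $t+1$, square-free elsewhere'') is encoded once, and the arithmetic is then entirely mechanical. It also has the pleasant feature that the generating function already gives the correct values $T(0),\dots,T(3)$ via truncated convolutions, so the small cases need not be checked by hand (though doing so is a good sanity check). The paper's decomposition, on the other hand, is more elementary in that it avoids any appeal to the zeta function or Euler products, relying only on Lemma~\ref{square-free} and inclusion--exclusion; this makes it more self-contained but also more laborious. Both approaches ultimately rest on the same factorization observation $g=t^a(t+1)^b h$ with $h$ square-free and coprime to $t(t+1)$.
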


\begin{proof}
For $n=0,1,2,3$, by simple calculations, $T(n)=1,2,4,6$, respectively.
Now, we suppose $n\geq4$, and denote  by $U(n)$ the set of all polynomials of degree $n$ in $\A$ satisfying the condition in Corollary~\ref{cor:gg}~(1).
 Clearly, $U(n)$ can be divided into four disjoin subsets:
\begin{align*}
& U_{1}(n)=\{g :\,  \textrm{$g$ square-free, $\deg g =n$} \}, \\
& U_{2}(n)=\{x^{2}g :\, \textrm{$g$ square-free, $\deg g =n-2, x\nmid g$}\},\\
 & U_{3}(n)=\{(x+1)^{2}g :\, \textrm{$g$ square-free, $\deg g= n-2, x+1\nmid g$}\},\\
  & U_{4}(n)=\{x^{2}(x+1)^{2}g :\, \textrm{$g$ square-free, $\deg g= n-4, x\nmid g, x+1\nmid g$}\}.
\end{align*}
So, we have
$$
T(n)= \# U(n) = \# U_{1}(n)+\# U_{2}(n)+\# U_{3}(n)+\# U_{4}(n).
$$
Then, it remains to compute the sizes $\# U_{i}(n), i=1,2,3,4$.  

Firstly, by Lemma \ref{square-free}, we know that 
$$
\# U_{1}(n)=S(n)=2^{n-1}.
$$

For $U_{2}(n)$, we have
\begin{align*}
U_{2}(n) 
= & \{x^{2}g :\, \textrm{$g$ square-free,  $\deg g =n-2$} \} \\
&-\{x^{3}g :\, \textrm{$g$ square-free, $\deg g = n-3$, $x\nmid g$}\},
\end {align*}
which gives the following recursive formula
$$
\# U_{2}(n)=S(n-2)-\# U_{2}(n-1).
$$
So, we obtain
$$
\# U_{2}(n)=\sum_{i=0}^{n-2}(-1)^{i}S(n-2-i).
$$
Using Lemma \ref{square-free}, we further have
$$
\# U_{2}(n)=\frac{1}{3}\big( 2^{n-2} + (-1)^{n-1} \big).
$$

For $U_{3}(n)$, by symmetry, we have
$$
\# U_{3}(n)=\# U_{2}(n)= \frac{1}{3}\big( 2^{n-2} + (-1)^{n-1} \big).
$$

For $U_{4}(n)$, we first have 
\begin{align*}
\# U_{4}(n)   = & \# \{g :\, \textrm{$g$ square-free, $\deg g= n-4, x\nmid g, (x+1)\nmid g$}\} \\
=& \# \{g :\, \textrm{$g$ square-free, $\deg g= n-4$}\}\\
&- \# \{g :\, \textrm{$g$ square-free, $\deg g= n-4,\ x\mid g, (x+1)\nmid g$}\}\\
& - \# \{g :\, \textrm{$g$ square-free, $\deg g= n-4,\ x\nmid g, (x+1)\mid g$}\}\\
& - \#  \{g :\, \textrm{$g$ square-free, $\deg g= n-4,\ x\mid g, (x+1)\mid g$}\}\\
= & \# \{g :\, \textrm{$g$ square-free, $\deg g= n-4$}\}\\
&- \# \{g :\, \textrm{$g$ square-free, $\deg g= n-5,\ x\nmid g, (x+1)\nmid g$}\}\\
& - \# \{g :\, \textrm{$g$ square-free, $\deg g= n-5,\ x\nmid g, (x+1)\nmid g$}\}\\
& - \# \{g :\, \textrm{$g$ square-free, $\deg g= n-6,\ x\nmid g, (x+1)\nmid g$}\},
\end {align*}
which implies that
$$
\# U_{4}(n)=S(n-4)-2\# U_{4}(n-1)-\# U_{4}(n-2).
$$
So, we get
$$
\# U_{4}(n)=\sum_{i=0}^{n-4}(-1)^{i}(i+1)S(n-4-i).
$$
Using Lemma \ref{square-free} again, we obtain 
$$
\# U_{4}(n)=\frac{1}{9} \big( 2^{n-3} +(-1)^{n-1}(3n-19) \big).
$$

 Finally, collecting the above calculations, we have 
 $$
 T(n)=\frac{1}{9} \big( 2^{n-3}7^2+(-1)^{n-1}(3n-13) \big), \quad n \ge 4.
 $$
 This completes the proof.
\end{proof}

We are now ready to prove Theorem~\ref{thm:density}. 

\begin{proof}[Proof of Theorem~\ref{thm:density}] 
When $q=2$, by Corollary~\ref{cor:gg}~(1) and Lemma \ref{square-free2}, we have 
\begin{align*}
\rho & = \lim_{m \to \infty} \frac{\# \{g\in \A:\, \textrm{$1 \le \deg g \le m$, $(g,g)$ is a Chen pair}\}}{\# \{g \in \A:\, 1 \le \deg g \le m\}} \\ 
&=\lim_{m \to \infty}\frac{T(1)+ T(2) + \cdots + T(m)}{2^{m+1} - 2}\\
&=\frac{49}{72}.
\end{align*}

Besides, when $q>2$, using Corollary~\ref{cor:gg}~(2) and Lemma \ref{square-free}, we obtain 
\begin{align*}
\rho &=\lim_{m \to \infty}\frac{(q-1)(S(1)+\cdots+S(m))}{q^{m+1} - q}\\
&=\lim_{m \to \infty}\frac{(q-1)q^{m}}{q^{m+1} -q }\\
&=\frac{q-1}{q}.
\end{align*}
\end{proof}

\section{Proof of Theorem~\ref{thm:char}}
\label{sec:char-P}

In this section, $P$ is always an irreducible polynomial of degree $d$ in $\A$. 
Let $v_{P}$ be the additive valuation of $\A$ at $P$.
By convention, put $v_P(0) = \infty$. 

We first recall the notion of $P$-sequence and homogeneous $P$-sequence over $\A$ as in Frisch's PhD thesis \cite[Definition 2.3]{Frisch1993}. 

\begin{definition}  \label{def:Pseq}
A sequence $\{ u_{i}\}$ (finite or infinite) over $\A$ is a $P$-sequence, if for any $m \in \N$ and all $i,j$,  
$$
 v_{P}(u_{i} -u_{j})\geq m \, \Longleftrightarrow \, q^{dm} \mid i-j,
$$
 and a homogeneous $P$-sequence if in addition 
 $$
 \forall \,  i, m \in \N,  \quad  v_{P}(u_{i})\geq m  \, \Longleftrightarrow \,  q^{dm} \mid i.
 $$
\end{definition}

In particular, any $P$-sequence $\{ u_{i}\}$ is also a homogeneous $P$-sequence if $u_0=0$. 

For the sequence $\{b_k\}_{k=0}^{\infty}$ defined in Section~\ref{sec:char}, 
it is easy to see that it is a homogeneous $P$-sequence, that is, for any $i,j,m \in \N$, 
 $$
 v_{P}(b_{i} -b_{j}) \geq m \, \Longleftrightarrow \,  q^{dm} \mid i-j.
 $$

The following result is a key lemma in the proof. 
It is a special case in \cite[Lemma 2.24]{Frisch1993}. 
Here we omit its proof. 

\begin{lemma}[\cite{Frisch1993}]  \label{lem:2.24}
For any integer $k \ge 1$, let $\{b_{i}^{'}\}_{i=1}^{k}$ be a re-ordering of $\{b_{i}\}_{i=1}^{k}$ by increasing valuation at $P$, 
and $\{u_i\}_{i=1}^{k}$ a $P$-sequence over $\A$. 
Then, for any non-empty subset $J \subseteq \{1,2,\ldots,k\}$ and any $h_1, h_2 \in \A$, we have
$$
v_{P} \big(\prod_{j\in J}(h_1 - u_{j})-\prod_{j\in J}(h_2 - u_{j}) \big) \geq v_{P}(h_1 - h_2) + \sum_{i=1}^{|J|-1}v_{P}(b_{i}^{'}).
$$
\end{lemma}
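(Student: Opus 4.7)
My plan is to prove Theorem~\ref{thm:char} in three stages: first, establish a triangular ``Newton-type'' expansion $\sigma=\sum c_kB_k$ using the sequence $\{b_k\}$; then show sufficiency of the coefficient constraints via Lemma~\ref{lem:2.24}; then derive necessity by induction on $k$.

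\textit{Stage 1 (Triangular basis).} First I would verify that $\bar b_0,\bar b_1,\ldots,\bar b_{q^n-1}$ is a complete set of distinct representatives of $\A_f$. From the base-$q^d$ expansion $k=\sum_{j=0}^m l_jq^{dj}$ underlying $b_k=\sum_j b_{l_j}P^j$, the ordering convention on $\{b_0,\ldots,b_{q^d-1}\}$ gives $\deg b_{l_m}\le\min(d,n-dm)-1$, so a case split on whether $d\le n-dm$ yields $\deg b_k\le n-1$ for all $k<q^n$; hence $b_k\in\cA_f$, and injectivity of $k\mapsto b_k$ produces $q^n$ distinct classes. Since $Q_k(b_k)=1$ and $Q_k(b_j)=0$ for $j<k$, the matrix $\bigl(B_k(\bar b_j)\bigr)_{j,k}$ is lower unitriangular, so every $\sigma:\A_f\to\A_{P^e}$ admits a unique expansion $\sigma=\sum c_kB_k$ obtained by forward substitution.

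\textit{Stage 2 (Sufficiency).} Assuming $c_k$ lies in the subgroup generated by $P^{\mu(k)}\bmod P^e$, I would expand $\sigma(\bar h_1)-\sigma(\bar h_2)\equiv\sum_{k\ge 1}c_k(Q_k(h_1)-Q_k(h_2))\pmod{P^e}$ and bound each term. Applying Lemma~\ref{lem:2.24} to the shifted $P$-sequence $u_j:=b_{j-1}$ with $J=\{1,\ldots,k\}$ yields
\[
v_P\!\Bigl(\prod_{j=0}^{k-1}(h_1-b_j)-\prod_{j=0}^{k-1}(h_2-b_j)\Bigr)\ge v_P(h_1-h_2)+\tau(k)-\mu(k),
\]
where $\tau(k):=\sum_{i=1}^k v_P(b_i)$ and the identity $\max_{1\le i\le k}v_P(b_i)=\mu(k)$ identifies the discarded largest valuation. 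The homogeneous $P$-sequence property gives $v_P\bigl(\prod_{j=0}^{k-1}(b_k-b_j)\bigr)=\tau(k)$ via the change of variable $s=k-j$. Dividing, $v_P(Q_k(h_1)-Q_k(h_2))\ge v_P(h_1-h_2)-\mu(k)$, and multiplication by $c_k$ restores the $\mu(k)$, delivering $v_P(\sigma(\bar h_1)-\sigma(\bar h_2))\ge\min(e,v_P(h_1-h_2))$.

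\textit{Stage 3 (Necessity) and main obstacle.} I would induct on $k$; the cases $k\le 1$ impose no constraint since $\mu(1)=0$. Assuming $c_0,\ldots,c_{k-1}$ already satisfy the valuation constraint, Stage~2 applied to the partial sum shows $\sum_{j<k}c_jB_j$ is congruence preserving, hence so is $\sigma_k:=\sigma-\sum_{j<k}c_jB_j$. Triangularity forces $\sigma_k(\bar b_i)=0$ for $i<k$ and $\sigma_k(\bar b_k)=c_k$. Choosing $i:=k-q^{d\mu(k)}$, we have $0\le i<k$ and $v_P(b_k-b_i)=\mu(k)$ by the $P$-sequence property, so congruence preservation of $\sigma_k$ at $(\bar b_k,\bar b_i)$ forces $v_P(c_k)\ge\min(e,\mu(k))$, closing the induction. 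I expect the hardest step to be the valuation bookkeeping in Stage~2: the sum of $k-1$ smallest valuations appearing in Lemma~\ref{lem:2.24} must reconcile with the full sum $\tau(k)$ in the denominator of $Q_k$ so that the discrepancy is \emph{exactly} $\mu(k)$. This sharpness is what makes the constraint $c_k\in P^{\mu(k)}$ simultaneously necessary and sufficient, and it rests entirely on the identity $\max_{1\le i\le k}v_P(b_i)=\mu(k)$, which in turn follows from the homogeneous $P$-sequence property of $\{b_i\}$. A secondary subtlety is the degree bound $\deg b_k\le n-1$ in Stage~1, which is non-trivial and requires the case split above.
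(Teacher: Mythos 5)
Your proposal does not prove the statement at hand. The statement is Lemma~\ref{lem:2.24} itself --- the valuation inequality
$v_{P}\big(\prod_{j\in J}(h_1 - u_{j})-\prod_{j\in J}(h_2 - u_{j})\big) \geq v_{P}(h_1 - h_2) + \sum_{i=1}^{|J|-1}v_{P}(b_{i}^{'})$
for an arbitrary $P$-sequence $\{u_i\}$ and arbitrary subset $J$ --- but what you have written is a proof sketch of Theorem~\ref{thm:char}, in which Lemma~\ref{lem:2.24} is invoked as a black box (``Applying Lemma~\ref{lem:2.24} to the shifted $P$-sequence $u_j:=b_{j-1}$\dots''). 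Nowhere do you establish the inequality itself, so as a proof of the stated lemma the submission is vacuous. (For the record, the paper also gives no proof: it cites Frisch's thesis and explicitly omits the argument, so there is no internal proof to compare against; but that does not make an empty proof acceptable.)

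If you did want to prove the lemma, the natural route is to telescope the difference of products as
$\sum_{l\in J}\big(\prod_{j\in J,\,j<l}(h_1-u_j)\big)\,(h_2-h_1)\,\big(\prod_{j\in J,\,j>l}(h_2-u_j)\big)$
(up to sign), which immediately contributes $v_P(h_1-h_2)$ from the middle factor; the real content is then the lower bound $v_P\big(\prod_{j\in J\setminus\{l\}}(h-u_j)\big)\ge\sum_{i=1}^{|J|-1}v_P(b_i')$ for any $h\in\A$, which follows from the $P$-sequence property: among any set of indices, the number of $j$ with $v_P(h-u_j)\ge m$ is at most the number of residues of the index set modulo $q^{dm}$ hitting a single class, and comparing these counts with the counts for the homogeneous $P$-sequence $\{b_i\}$ shows the product of differences is minimized, valuation-wise, by the sum of the $|J|-1$ smallest values $v_P(b_i')$. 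None of this appears in your writeup. Separately, your Stage~3 (necessity via induction and the test pair $\bar b_k,\bar b_{k-q^{d\mu(k)}}$) is a legitimate alternative to the paper's counting argument for Theorem~\ref{thm:char}, but that is a different statement from the one you were asked to prove.
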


Recall that  $\mu(k)=\lfloor\frac{\log_{q}k}{d}\rfloor, k \ge 1$.  
The following lemma is a direct consequence of Lemma~\ref{lem:2.24}. 
We need it to prove the congruence preserving property. 

\begin{lemma}  \label{lem:2.26} 
For any integer $k \ge 1$ and any polynomials $h_1, h_2 \in \A$, we have
$$
v_{P} \big(\prod_{j=0}^{k-1}(h_1 - b_{j})-\prod_{j=0}^{k-1}(h_2 - b_{j}) \big) 
\geq  v_{P}(h_1 - h_2) + \sum_{j \ge 1} \lfloor \frac{k}{q^{dj}} \rfloor  - \mu(k).
$$
\end{lemma}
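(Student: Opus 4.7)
My plan is to derive Lemma~\ref{lem:2.26} by a single application of Lemma~\ref{lem:2.24}, followed by an explicit evaluation of the combinatorial sum appearing on its right-hand side. The key observation is that although Lemma~\ref{lem:2.24} is stated for an arbitrary $P$-sequence $\{u_i\}_{i=1}^{k}$, I am free to take the shifted sequence $u_j := b_{j-1}$ for $j=1,\dots,k$. This is legitimate because $\{b_i\}_{i\ge 0}$ is a homogeneous $P$-sequence, so $v_{P}(b_{i-1}-b_{j-1})\ge m$ if and only if $q^{dm}\mid i-j$. Taking $J=\{1,\ldots,k\}$ then turns $\prod_{j\in J}(h-u_j)$ into $\prod_{j=0}^{k-1}(h-b_j)$, and Lemma~\ref{lem:2.24} yields
$$
v_{P}\left(\prod_{j=0}^{k-1}(h_1-b_j) - \prod_{j=0}^{k-1}(h_2-b_j)\right) \ge v_{P}(h_1-h_2) + \sum_{i=1}^{k-1} v_{P}(b_i'),
$$
where $\{b_i'\}_{i=1}^{k}$ is the re-ordering of $\{b_i\}_{i=1}^{k}$ by increasing valuation at $P$.

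It then remains to evaluate $\sum_{i=1}^{k-1} v_{P}(b_i')$. Since $b_0=0$, the homogeneous $P$-sequence property gives $v_{P}(b_i)\ge j \Longleftrightarrow q^{dj}\mid i$, and counting by level sets produces
$$
\sum_{i=1}^{k} v_{P}(b_i') = \sum_{i=1}^{k} v_{P}(b_i) = \sum_{j\ge 1}\#\{i : 1\le i\le k,\ q^{dj}\mid i\} = \sum_{j\ge 1}\lfloor k/q^{dj}\rfloor.
$$
Because the re-ordering is by increasing valuation, $v_{P}(b_k')$ equals the largest valuation among $b_1,\ldots,b_k$, which by the same property is the largest $m$ with $q^{dm}\le k$, namely $\mu(k)=\lfloor\frac{\log_{q}k}{d}\rfloor$. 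Subtracting leaves $\sum_{i=1}^{k-1} v_{P}(b_i') = \sum_{j\ge 1}\lfloor k/q^{dj}\rfloor - \mu(k)$, which matches the bound claimed in Lemma~\ref{lem:2.26}.

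No serious obstacle is anticipated. The only points requiring care are the verification that the index-shifted sequence $u_j=b_{j-1}$ is indeed a $P$-sequence and the identification of $\max_{1\le i\le k} v_{P}(b_i)$ with $\mu(k)$, both of which follow directly from the definition of a homogeneous $P$-sequence.
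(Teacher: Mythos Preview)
Your proposal is correct and follows essentially the same route as the paper: apply Lemma~\ref{lem:2.24} with $u_j=b_{j-1}$ and $J=\{1,\dots,k\}$, then identify $\sum_{i=1}^{k-1}v_P(b_i')=\sum_{j\ge1}\lfloor k/q^{dj}\rfloor-\mu(k)$. The only cosmetic differences are that the paper separates out the trivial case $k=1$ and cites \cite[Lemma~2.7(a)]{Frisch1996} for the identity $\sum_{i=1}^{k}v_P(b_i)=\sum_{j\ge1}\lfloor k/q^{dj}\rfloor$, whereas you derive it directly from the homogeneous $P$-sequence property; your argument is self-contained and in fact yields equality where the paper writes only~$\ge$.
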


\begin{proof} 
The case $k=1$ is trivial by noticing $b_0 = 0$. 
We assume that $k \ge 2$.  
In Lemma~\ref{lem:2.24}, choosing $u_i = b_{i-1}, i =1, 2, \ldots, k$ and $J=\{1,2, \ldots, k\}$, we obtain  
$$
v_{P} \big(\prod_{j=0}^{k-1}(h_1 - b_{j})-\prod_{j=0}^{k-1}(h_2 - b_{j}) \big) \geq v_{P}(h_1 - h_2) + \sum_{i=1}^{k-1}v_{P}(b_{i}^{'}).
$$
Besides, we have 
\begin{align*}
\sum_{i=1}^{k-1}v_{P}(b_{i}^{'})   \ge \sum_{i=1}^{k}v_{P}(b_{i}) - v_P(b_{k}^{'})  
 =  \sum_{j \ge 1} \lfloor \frac{k}{q^{dj}} \rfloor  - \mu(k), 
\end{align*}
where the equality follows from \cite[Lemma 2.7 (a)]{Frisch1996} and the fact $v_P(b_{k}^{'})=\mu(k)$.   
This in fact completes the proof. 
\end{proof}

As in \cite[Proposition 1.6]{CGG2016}, we have: 

\begin{lemma} \label{lem:rep}
For every function $\sigma: \A_f\to\A_{P^e}$, there is a unique sequence $\{c_{0},c_{1},\cdots,c_{q^{n}-1}\}$ of elements of $\A_{P^{e}}$ such that
$$
\sigma = \sum_{k=0}^{q^{n}-1}c_{k}B_{k}, 
$$ 
where $n=\deg f$.  
\end{lemma}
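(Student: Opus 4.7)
The strategy is to evaluate the proposed identity $\sigma = \sum_{k=0}^{q^{n}-1} c_k B_k$ at a carefully chosen enumeration of all $q^n$ residue classes of $\A_f$, turning the problem into a square linear system for the unknowns $c_0, \ldots, c_{q^n-1}$ over $\A_{P^e}$ whose coefficient matrix is lower triangular with $1$'s on the diagonal. Unique solvability of this system delivers both existence and uniqueness at once.

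First I would verify that $\{b_0, b_1, \ldots, b_{q^n-1}\}$ is a complete set of representatives for $\A_f$. Distinctness is immediate from the uniqueness of $q^d$-adic expansions. The degree bound $\deg b_k < n$ for all $k < q^n$ follows by estimating the length of the $q^d$-adic expansion of $k$ together with the fact that each digit $b_{l_i}$ has degree less than $d$; a short case split on whether $d \mid n$ suffices. Since $\mathcal{A}_f$ has exactly $q^n$ elements, the list exhausts it.

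Next I would evaluate the proposed identity at $\bar{b_l}$ for $l = 0, 1, \ldots, q^n - 1$, obtaining the system
\begin{equation*}
\sigma(\bar{b_l}) \equiv \sum_{k=0}^{q^n - 1} c_k \, Q_k(b_l) \pmod{P^e}, \qquad l = 0, 1, \ldots, q^n - 1.
\end{equation*}
The coefficient matrix $A = (Q_k(b_l) \bmod P^e)_{l,k}$ is lower triangular: for $l < k$ the numerator $\prod_{j=0}^{k-1}(x - b_j)$ of $Q_k$ contains the vanishing factor $(b_l - b_l)$, so $A_{lk} = 0$; and the normalization in the definition of $Q_k$ forces $A_{kk} = Q_k(b_k) = 1$. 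Hence the system is uniquely solvable by forward substitution, with
\begin{equation*}
c_l \equiv \sigma(\bar{b_l}) - \sum_{k < l} c_k \, Q_k(b_l) \pmod{P^e},
\end{equation*}
which gives both the existence of a representation and its uniqueness.

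The main obstacle is the degree bookkeeping in the first step: one has to check carefully, especially in the mixed case $d \nmid n$, that the prescribed list really produces $q^n$ distinct elements of $\mathcal{A}_f$. Once that is settled, the lower-triangular shape of the evaluation matrix is essentially forced by the definition of $Q_k$, and everything else is routine forward substitution over the local ring $\A_{P^e}$.
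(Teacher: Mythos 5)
Your proof is correct and follows essentially the same route as the paper: both observe that $B_k(\bar{b}_l)=0$ for $l<k$ and $B_k(\bar{b}_k)=1$, so that evaluating at $\bar{b}_0,\ldots,\bar{b}_{q^n-1}$ yields a unitriangular system solved uniquely by forward substitution (the paper phrases this as induction). Your explicit verification that $\{b_0,\ldots,b_{q^n-1}\}$ exhausts $\mathcal{A}_f$ is a point the paper dismisses as "easy to see," but it is the same claim.
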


\begin{proof} 
It is easy to see that $ \mathcal{A}_{f} = \{b_{0}=0,b_{1},\cdots,b_{q^n - 1}\}$. 
We also note that $B_{k}(\bar{b}_{k})=1 \textrm{ mod $P^e$}$, and $B_{k}(\bar{b}_{i})=0 \textrm{ mod $P^e$}$ for any $i < k $. 
Then, $\sigma(\bar{b}_0)=c_0$, $\sigma(\bar{b}_1) = c_0 + c_1$, and so on. 
Thus, the existence and uniqueness of the sequence $\{c_{0},c_{1},\cdots,c_{q^{n}-1}\}$ can be proved by induction. 
\end{proof}

Now, we are ready to prove Theorem~\ref{thm:char}.

\begin{proof}[Proof of Theorem~\ref{thm:char}]
We first prove the sufficiency. It is equivalent to prove that for any $1 \le k \le q^n-1$,  
the function $c_kB_k$ is congruence preserving 
when $c_{k}$ is in the subgroup generated by $P^{\mu(k)} \pmod{P^{e}}$. 
If $\mu(k) \ge e$, then $c_{k}$ is the zero element in $\A_{P^e}$, 
and so $c_kB_k$ is the zero function and automatically congruence preserving. 
We now suppose $\mu(k) < e$. 
Let $c_k^* \in \A$ be an arbitrary representative of $c_k \in \A_{P^e}$. 
By definition, it suffices to show that for any $h_1,h_2 \in \A$, 
$$
v_P(c_k^* Q_k(h_1) - c_k^* Q_k(h_2))  \ge v_P(h_1 - h_2). 
$$
Indeed, applying Lemma~\ref{lem:2.26} and \cite[Lemma 2.7 (b)]{Frisch1996} and noticing $\mu(k) < e$, 
we have 
\begin{align*}
v_P(c_k^* Q_k(h_1) - c_k^* Q_k(h_2))  
& \ge v_P(c_k^*) + v_{P}(h_1 - h_2)   - \mu(k) \\
& \ge v_{P}(h_1 - h_2), 
\end{align*}
which implies the sufficiency. 

Finally, we prove the necessity by counting argument. 
By Lemma~\ref{lem:rep}, the number of congruence preserving functions having the form \eqref{eq:char} is 
equal to 
 \begin{align*}
 \prod_{k=0}^{q^{n}-1}\#\{c_{k}\}
&=|P|^e \prod_{k=1}^{q^{n}-1}|P|^{e-\min\{e,\mu(k)\}}\\
&=|P|^{e+\sum_{k=1}^{q^{n}-1}(e-\min\{e,\lfloor\frac{\log_{q}k}{d}\rfloor\})}\\
&=|P|^{eq^{n}-\sum_{k=1}^{q^{n}-1}\min\{e,\lfloor\frac{\log_{q}k}{d}\rfloor\}}\\
&=|P|^{eq^{n}-(q-1)\Sigma_{k=1}^{n-1}q^{k}\min\{e,\lfloor\frac{k}{d}\rfloor\}},
\end{align*}
where the last equality comes from \eqref{eq:MfPe2}. 
This coincides with Lemma~\ref{lem:MfPe}. 
We thus complete the proof. 
\end{proof}

\section*{Acknowledgement}

The authors are grateful to  Professor Sophie Frisch for valuable discussions.  
For the research, the first author was supported by the National Science Foundation of China Grant No. 11526119 
and the Scientific Research Foundation of Qufu Normal University No. BSQD20130139, 
and the second author was supported by a Macquarie University Research Fellowship.

\end{document}